\newtheorem{definition}{Definition}[section]
\newtheorem{lemma}[definition]{Lemma}
\newtheorem{theorem}[definition]{Theorem}
\newtheorem{proposition}[definition]{Proposition}
\newtheorem{remark}[definition]{Remark}
\newtheorem{example}[definition]{Example}
\def\Proof{\noindent{\it Proof.}\ }
\def\e{\varepsilon}
\def\dx{\,dx}
\def\ZZ{\mathbb{Z}}
\def\NN{\mathbb{N}}
\def\rr{\mathbb{R}}
\def \trait (#1) (#2) (#3){\vrule width #1pt height #2pt depth #3pt}
\def \qed{\hfill
        \trait (0.1) (6) (0)
        \trait (6) (0.1) (0)
        \kern-6pt
        \trait (6) (6) (-5.9)
        \trait (0.1) (6) (0)
\medskip}
\begin{document}
\title{Discrete double-porosity models}
\author{
{\sc Andrea Braides}
\\ Dipartimento di Matematica\\
 Universit\`a di Roma `Tor Vergata'\\
via della Ricerca Scientifica\\ 00133 Rome, Italy\\
\and  {\sc Valeria Chiad\`o Piat}
\\ Dipartimento di Matematica
\\ Politecnico di Torino \\  corso Duca degli Abruzzi 24\\
10129 Torino, Italy
\and
{\sc Andrey Piatnitski}
\\ Faculty of Technology, Narvik University College,
\\ HiN, Postbox 385, 8505 Narvik, Norway
\\ and P.N. Lebedev Physical Institute
\\ RAS, Leninski prospect 53, Moscow 119921, Russia}\date{}                                           

\maketitle

\section{Introduction}
Variational theories of double-porosity models can be derived by homogenization of high-contrast periodic media
(see \cite{BCP}).
Typically, we have one or more strong phases (i.e., uniformly elliptic energies on periodic connected domains)
and a weak phase with a small ellipticity constant, coupled with some lower-order term.
In the simplest case of quadratic energies, this amounts to considering energies of the form
\begin{equation}\label{unno}
\sum_{j=1}^N \int_{\Omega\cap \e C_j} |\nabla u|^2\dx +\e^2\int_{\Omega\cap\e C_0}|\nabla u|^2\d
+K\int_\Omega |u-u_0|^2\dx,
\end{equation}
where $\e$ is a geometric parameter representing the scale of the media. The strong components are modeled
for $j=1,\ldots, N$ by periodic connected Lipschitz sets $C_j$ of $\rr^d$ with pairwise disjoint closures; in this notation $C_0$ is their complement and represents the weak phase. Note that we may have $N>1$ only in dimension $d\ge 3$, while in dimension $d=2$ this model represents a single strong medium with weak inclusions (i.e., the set $C_0$ is composed of disjoint bounded components).  In dimension $d=1$ the energy trivializes since $C_0$ must be empty and the energy is then $\e$-independent. The scaling $\e^2$ in front of the weak phase is chosen so that the limit is non trivial; the analyses for all other scalings are derived from this one by comparison.

If we let $\e\to0$ these energies are approximated by their $\Gamma$-limit (\cite{GCB,Handbook}), which combines the homogenized energies of each strong medium (which exist by \cite{ACDP,BCP}) and a coupling term.
Note that the energies above are not strongly coercive in $L^2$. They are weakly coercive in $L^2$, but their limit is more meaningful if computed with respect to some topology which takes into account the strong limit of the functions on each strong component (or, more precisely, of the extensions of the restrictions of functions on each $\e C_j$, which are taken into account by the fundamental lemma by Acerbi, Chiad\`o Piat, Dal Maso and Percivale \cite{ACDP}).
In this way a convergence $u_\e\to (u_1,\ldots, u_N)$ is defined, the limit then depends on these $N$ independent functions, and takes the form
\begin{equation}\label{duue}
\sum_{j=1}^N \int_{\Omega}\Bigl( \langle A^j_{\rm hom}\nabla u_j, \nabla u_j\rangle+Kc_j |u_j-u_0|^2\Bigr)\dx
+\int_\Omega \varphi(u_0, u_1,\ldots, u_N)\dx,
\end{equation}
where $c_j$ are the volume fractions of the strong components and $\varphi$ is a quadratic function taking into account the interaction between the macroscopic phases. Note that the lower-order term is not continuous with respect to the convergence $u_\e\to(u_1,\ldots, u_N)$, which explains the appearance of an interaction term, whose computation in general involves a minimum problem on the weak phase $C_0$ (see \cite{BCP} for results in the general framework of $p$-growth Sobolev energies, \cite{Solci, Solci2} for perimeter energies, and \cite{BS-UMI} for free-discontinuity problems).

In this paper we derive double-porosity models from very simple atomistic interactions. Again in the case of quadratic
energies, we may write the microscopic energies (in the case of the cubic lattice $\ZZ^d$) as
\begin{equation}\label{trre}
\sum_{(\alpha,\beta)\in\e {\cal N}_1\cap(\Omega\times\Omega)}\e^d\Bigl|{u_\alpha-u_\beta\over\e}\Bigr|^2
+\e^2\sum_{(\alpha,\beta)\in\e {\cal N}_0\cap(\Omega\times\Omega)}\e^d\Bigl|{u_\alpha-u_\beta\over\e}\Bigr|^2
+K\sum_{\alpha\in \e\ZZ^d\cap\Omega}\e^d|u_\alpha- (u_0)_\alpha|^2.
\end{equation}
For explicatory purposes here we use a simplified notation with respect to the rest of the paper, and we denote by ${\cal N}_1$ the set of pairs in $\ZZ^d\times \ZZ^d$ between which we have strong interactions, and by
${\cal N}_0$ the set of pairs in $\ZZ^d\times \ZZ^d$ between which we have weak interactions. The energies depend on discrete functions whose values $u_\alpha$ are defined for $\alpha\in\e\ZZ^d$. Connected graphs of points linked by strong interactions play the role that in the continuum models is played by the sets $C_j$ ($j\neq 0$).
In order to define a limit continuous parameter, we have to suppose that at least one infinite such connected graph
exists, in which case
we may take the limit of (extension of) piecewise-constant interpolations of $u_\alpha$ on this graph as a continuous parameter. If we have more such infinite connected graphs the limit is described again by an array $(u_1,\ldots, u_N)$. In the more precise notation of this paper below we directly define the (analogs of the) $C_j$ and derive the corresponding strong and weak interaction accordingly. Note that weak interactions in ${\cal N}_0$ are due either to the existence of ``weak sites'' or to weak bonds between different ``strong components'', and, if we have more than one strong graph, the interactions in ${\cal N}_0$ are present also in the absence of a weak component. Under such assumptions, the limit is again of the form (\ref{duue}). In the paper we treat the general case of vector-valued $u_\alpha$, where the energy densities are given by some asymptotic formulas.

From the description of $\Gamma$-limits we also derive a dynamic results using the theory of minimizing movements. Under convexity assumptions, in that framework, the behaviour of gradient flows of a sequence $F_\e$
is described by the analysis of discrete trajectories $u^{\tau,\e}_j$ defined iteratively as minimizers of
$$
F_\e(u)+{1\over 2\tau}\|u-u^{\tau,\e}_{j-1}\|^2,
$$
with $\tau$ a time step (in our case the norm is the $L^2$-norm for discrete functions).
In our case, we take as $F_\e$ the energies above without lower-order term (i.e., with $K=0$).
We first show the strong convergence of $u^{\tau,\e}_{j}$ as $\e\to 0$. In this way, we can treat
these functions as fixed and apply the static limit results with $K={1/(2\tau)}$.
We may then follow the theory for equi-coercive and convex functionals,
for which gradient-flow dynamics commutes with the static limit (see \cite{2013LN,AG,AGS}).
As a result we show that the limit is described by a coupled system of PDEs (in the strong phases)
and ODEs (parameterized by the weak phase). It is interesting to note that this latter parameterization
is easily obtained by a discrete two-scale limit of the trajectories.

We finally note that in the discrete environment the topological requirements governing the interactions between the strong and weak phases are substituted by assumptions on long-range interactions. In particular, for discrete systems with second-neighbour interactions we may have a multi-phase limit also in dimension one.

\section{Notation}\label{Not}
The numbers $d$, $m$, $T$ and $N$ are positive integers.
We introduce a $T$ periodic {\em label function} $J:\ZZ^d\to\{0,1\ldots, N\}$, and the corresponding sets of sites
$$
A_j=\{ k\in\ZZ^d: J(k)=j\},\qquad j=0,\ldots, N.
$$

Sites interact through possibly long (but finite)-range interactions, whose range is defined through finite subsets
$P_j\subset \ZZ^d$, $j=0,\ldots, N$. Each $P_j$ is symmetric and $0\in P_j$.

We say that two points $k,k'\in A_j$ are $P_j$-{\em connected} in $A_j$ if there exists a path $\{k_n\}_{n=0,\ldots,K}$
such that $k_n\in A_j$, $k_0=k$, $k_K=k'$ and $k_n-k_{n-1}\in P_j$.

We suppose that there exists a unique infinite $P_j$-connected component of each $A_j$ for $j=1,\ldots, N$, which we denote by $C_j$. Note that we do not make any such assumption for $A_0$.

We consider the following sets of bonds between sites in $\ZZ^d$:
for $j=1,\ldots, N$
$$
N_j=\{(k,k'): k,k'\in A_j,  
k-k'\in P_j\setminus \{0\}\};
$$
for $j=0$
$$
N_0=\{(k,k'): k-k'\in P_0\setminus\{0\}, J(k)J(k')=0\hbox{ or } J(k)\neq J(k')\}.
$$
Note that the set $N_0$ takes into account of interactions not only among points of the set $A_0$, but also among pair of points in different $A_j$. A more refined model could be introduced by defining range of interactions $P_{ij}$ and the corresponding sets $N_{ij}$, in which case the sets $N_j$ would correspond to $N_{jj}$ for $j=1,\ldots, N$ and $N_0$ the union of the remaining sets. However, for simplicity of presentation we limit our notation to a single index.

We consider interaction energy densities $f:\ZZ^d\times\ZZ^d\times\rr^m\to \rr$ and $g:\ZZ^d\times\rr^m\to \rr$.
Note that the values of the function $f(k,k',z)$ will be considered only for  $(k,k')$ belonging to some $N_j$.
The  functions $f$ and $g$ satisfy the following conditions:
$f(k,k',z)= f(k',k,z)$ (this is not a restriction up to substituting $f(k,k',z)$ with ${1\over 2}(f(k,k',z)+f(k',k,z)$)
and there exists $p>1$ such that
\begin{equation}\label{c1}
c(|z|^p-1)\le f(k,k',z)\le C(|z|^p+1)\qquad 0\le f(k,k',z),
\end{equation}
\begin{equation}\label{c2}
|f(k,k',z)-f(k,k',z')|\le C|z-z'|\bigl(1+|z|^{p-1}+|z'|^{p-1}\bigr)
\end{equation}
\begin{equation}\label{c3}
f(k,k',\cdot) \hbox{ is positively homogeneous of degree $p$ if }(k,k')\in N_0
\end{equation}
\begin{equation}\label{c4}
0\le g(k,u)\le C(|z|^p+1)
\end{equation}
\begin{equation}\label{c5}
|g(k,z)-g(k,z')|\le C|z-z'|\bigl(1+|z|^{p-1}+|z'|^{p-1}\bigr).
\end{equation}

Given $\Omega$ a bounded regular open subset of $\rr^d$, we define the energies
\begin{eqnarray}\nonumber\label{fep}
F_\e(u)=F_\e\Bigl(u,{1\over\e}\Omega\Bigr)&=&\sum_{j=1}^N\sum_{(k,k')\in {\mathcal N}^\e_j(\Omega)} \e^d f\Bigl(k,k', {u_k-u_{k'}\over\e}\Bigr)\\
&&+
\sum_{(k,k')\in {\mathcal N}^\e_0(\Omega)} \  \e^{d+p} f\Bigl(k,k', {u_k-u_{k'}\over\e}\Bigr)+\sum_{k\in Z^\e(\Omega)} \e^d g(k, u_k),
\end{eqnarray}
where
\begin{equation}
{\mathcal N}^\e_j(\Omega)=N_j\cap {1\over \e} (\Omega\times\Omega), j=0,\ldots,N,
\qquad\qquad
Z^\e(\Omega)=\ZZ^d\cap {1\over \e} \Omega.
\end{equation}
The energy is defined on discrete functions $u: {1\over\e}\Omega\cap\ZZ^d\to\rr^m$.

The first sum in the energy takes into account all interactions between points in $A_j$ ({\em hard phases}), which are supposed to scale differently than those between points in $A_0$ ({\em soft phase}) or in different phases. The latter are contained in the second sum. The third sum is a zero-order term taking into account with the same scaling all types of phases.

Note that the first sum may take into account also points in $A_j\setminus C_j$, which form ``islands'' of the hard phase $P_j$-disconnected from the corresponding infinite component. Furthermore, in this energy we may have sites that do not interact at all with hard phases.

\section{Homogenization of ``perforated'' discrete domains}
In this section we separately consider the interactions in each infinite connected component of hard phase introduced above. To that end we fix one of the indices $j$, $j>0$, dropping it in the notation of this section (in particular we use the symbol $C$ in place of  $C_j$, etc.), and define the energies
\begin{eqnarray}\label{cale}
{\cal F}_\e(u)={\cal F}_\e\Bigl(u, {1\over\e}\Omega\Bigr)=\sum_{(k,k')\in N^\e_C(\Omega)} \e^d f\Bigl(k,k', {u_k-u_{k'}\over\e}\Bigr)\,,
\end{eqnarray}
where
\begin{equation}
N^\e_C(\Omega)= \Bigl\{ (k,k')\in (C\times C)\cap {1\over \e} (\Omega\times\Omega): k-k'\in P, k\neq k'\Bigr\},
\end{equation}
We also introduce the notation $C^\e(\Omega)= C\cap{1\over\e}\Omega$.

\begin{definition}\label{pci}\rm
The {\em piecewise-constant interpolation} of a function $u:\ZZ^d\cap{1\over\e}\Omega\to\rr^m$, $k\mapsto u_k$ is defined as
$$
u(x)= u_{\lfloor x/\e\rfloor},
$$
where $\lfloor y\rfloor= (\lfloor y_1\rfloor,\ldots, \lfloor y_d\rfloor)$ and $\lfloor s\rfloor$ stands for the integer part of $s$.
The {\em convergence} of a sequence $(u^\e)$ of discrete functions is understood as the $L^1_{\rm loc}(\Omega)$ convergence of these piecewise-constant interpolations. Note that, since we consider local convergence in $\Omega$, the value of $u(x)$ close to the boundary in not involved in the convergence process.
\end{definition}

We prove an extension and compactness lemma with respect to the convergence of piecewise-constant interpolations.

\begin{lemma}[extension and compactness]\label{exc}
Let $u^\e: {1\over\e}\Omega\to\rr^m$ be a sequence such that
\begin{equation}
\sup_\e\Bigl\{ \sum_{(k,k')\in N^\e_C(\Omega)} \e^d\Bigl|{u^\e_k-u^\e_{k'}\over\e}\Bigr|^p+\sum_{k\in C^\e(\Omega)}
\e^d|u^\e_k|\Bigr\}<+\infty.
\end{equation}
Then there exists a sequence $\widetilde u^\e: {1\over\e}\Omega\to\rr^m$ such that $\widetilde u^\e_k= u^\e_k$ if
$k\in C^\e(\Omega)$ and {\rm dist}$(k, \partial{1\over\e}\Omega)> C(T,p,d,m)$, with $u^\e$ converging to $u\in W^{1,p}(\Omega)$ up to subsequences.
\end{lemma}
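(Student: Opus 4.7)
The plan is to mimic the Acerbi--Chiad\`o Piat--Dal Maso--Percivale extension argument in the discrete periodic setting, exploiting the $T$-periodicity of the label function $J$ together with the $P$-connectedness of the infinite component $C$. Note first that by uniqueness of $C$ and by periodicity of $J$ the set $C$ is itself $T$-periodic, so every translate $Q_\alpha=T\alpha+\{0,\ldots,T-1\}^d$ of the period cube meets $C$.

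\textbf{Step 1: construction of the extension.} Choose once and for all a $T$-periodic ``selection map'' $\pi:\ZZ^d\to C$, by picking for each $k\in\ZZ^d$ a representative $\pi(k)\in C\cap Q_{\alpha(k)}$ (with $\pi(k)=k$ whenever $k\in C$), in a way which depends only on the residue class of $k$ modulo $T\ZZ^d$. I set $\widetilde u^\e_k:=u^\e_{\pi(k)}$ for every $k\in\ZZ^d\cap{1\over\e}\Omega$ whose period cube together with its neighbouring cubes is contained in ${1\over\e}\Omega$; this gives the boundary layer of width $C(T,p,d,m)$ in the statement.

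\textbf{Step 2: energy estimate for the extension.} Because $C$ is $P$-connected and $T$-periodic, there exists $L=L(T,P,d)$ such that any two points of $C$ lying in the same or in two nearest-neighbour period cubes can be joined by a $P$-path in $C$ of length at most $L$. For each nearest-neighbour bond $(k,k')$ well inside ${1\over\e}\Omega$, $\pi(k)$ and $\pi(k')$ lie in the same or in adjacent period cubes, so the difference $\widetilde u^\e_k-\widetilde u^\e_{k'}=u^\e_{\pi(k)}-u^\e_{\pi(k')}$ telescopes along such a path. The discrete Jensen inequality gives
$$
\Bigl|{\widetilde u^\e_k-\widetilde u^\e_{k'}\over\e}\Bigr|^p \le L^{p-1}\sum_{(h,h')\ \text{\rm in the path}} \Bigl|{u^\e_h-u^\e_{h'}\over\e}\Bigr|^p,
$$
and since the selection rule is $T$-periodic each $C$-bond is reused at most $M=M(T,P,d)$ times as the bond $(k,k')$ ranges over a sub-lattice. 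Summing yields
$$
\sum_{(k,k')\,{\rm n.n.\ in}\,{1\over\e}\Omega'}\!\e^d\Bigl|{\widetilde u^\e_k-\widetilde u^\e_{k'}\over\e}\Bigr|^p \le C \sum_{(k,k')\in N^\e_C(\Omega)}\!\e^d\Bigl|{u^\e_k-u^\e_{k'}\over\e}\Bigr|^p
$$
for every $\Omega'\Subset\Omega$. A parallel periodic Poincar\'e-type inequality (applied cube by cube) combined with the assumed $\ell^1$ bound on $u^\e$ on $C$ gives an $L^p$ bound for the piecewise-constant interpolation of $\widetilde u^\e$ on $\Omega'$.

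\textbf{Step 3: compactness.} Let $v^\e$ denote the piecewise-affine interpolation of $\widetilde u^\e$ on a Kuhn triangulation of $\e\ZZ^d$. A standard computation bounds $\|\nabla v^\e\|_{L^p(\Omega')}^p$ by a constant times the left-hand side of the displayed inequality above, so $(v^\e)$ is bounded in $W^{1,p}(\Omega')$. By Rellich--Kondrachov a subsequence converges strongly in $L^p(\Omega')$ to some $u\in W^{1,p}(\Omega')$; since the piecewise-constant and piecewise-affine interpolations of $\widetilde u^\e$ differ by at most $C\e\|\nabla v^\e\|_{L^p(\Omega')}$ in $L^p$, the piecewise-constant interpolations of $\widetilde u^\e$ converge to the same $u$ in $L^1_{\rm loc}(\Omega')$. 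Exhausting $\Omega$ by an increasing family $\Omega'\Subset\Omega$ and extracting diagonally produces $u\in W^{1,p}(\Omega)$.

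\textbf{Main obstacle.} The crucial technical step is the construction of $\pi$ in Step 1 and the accompanying bookkeeping in Step 2: one must guarantee that the total $p$-energy of the extension is controlled by the energy of $u^\e$ along $C$-bonds, with constants depending only on $T,P,d$. The boundary layer of width $C(T,p,d,m)$ appearing in the statement is unavoidable in this scheme, because defining $\pi(k)$ and the associated telescopic paths requires access to the period cube of $k$ and to its neighbouring cubes inside ${1\over\e}\Omega$.
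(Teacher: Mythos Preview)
Your argument is correct and follows the same overall plan as the paper: build a $T$-periodic extension operator cell by cell, control nearest-neighbour differences of the extension by telescoping along bounded-length $P$-paths in $C$ joining points in adjacent period cubes, and then invoke standard discrete compactness. The one noteworthy difference is the extension operator itself: you use a periodic \emph{selection} map $\pi$ (each site is sent to a single chosen site of $C$ in its cell), whereas the paper sets $\widetilde u^\e_k$ on $Y_i\setminus C$ equal to the \emph{average} of $u^\e$ over $C\cap Y_i$. Both choices work; the averaging route leads the paper through a max--min estimate on each pair of adjacent cells before the path argument, while your selection map makes the telescoping direct and the bond-reuse count transparent. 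Your compactness step (piecewise-affine interpolation plus Rellich) is also spelled out where the paper simply refers to \cite{AC}. One small point to tidy: the cube-by-cube Poincar\'e together with the $\ell^1$ hypothesis on $u^\e|_C$ yields only an $L^1$ bound on $\widetilde u^\e$, not directly $L^p$; to get the $W^{1,p}(\Omega')$ bound you actually use in Step~3 you need one more global Poincar\'e--Wirtinger on $\Omega'$ (the $L^1$ bound controls the mean), which is routine but should be made explicit.
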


\Proof It suffices to treat the scalar case $m=1$, up to arguing component-wise.

With fixed $i\in \ZZ^d$ we consider a periodicity cell $Y_i= iT+Y$, where $Y=[0,T)^d\cap \ZZ^d$.
If we consider $k\in C\cap Y_i$ and $k'\in C\cap Y'_i$,
where $Y'_i$ is either $Y_i$ or a neighboring periodicity cell, then the minimal path in $C$ connecting $k$ and $k'$ lies in a periodicity cube $\widetilde Y_i= iT+ [-DT, (D+1)T)^d$ for some positive integer $D$.
We suppose that such  $\widetilde Y_i$ is contained in
${1\over \e}\Omega$. This holds if
\begin{equation}\label{disto}
\hbox{ {\rm dist}}\Bigl(Y_i, \partial{1\over\e}\Omega\Bigr)> C(T)
\end{equation}
for some $C(T)$.

We define
$$
\widetilde u^\e_k={1\over \#(C\cap Y)} \sum_{l\in C\cap Y_i} u^\e_l\qquad \hbox{ for }k\in Y_i\setminus C.
$$
For $k\in Y_i$ and $|k-k'|=1$ (in the notation above $k'\in Y'_i$) we have
\begin{eqnarray*}
\e^d\Bigl|{\widetilde u^\e_k-\widetilde u^\e_{k'}\over\e}\Bigr|^p
\le \e^{d-p}\Bigl|\max_{Y_i\cup Y_i'}u^\e- \min_{Y_i\cup Y_i'}u^\e\Bigr|^p
= \e^{d-p}|u^\e_l- u^\e_{l'}|^p
\end{eqnarray*}
for some $l,l'\in Y_i\cup Y_i'$. We then may take a path $\{u_{l_n}\}_{n=1\ldots,N}$ in $C$ connecting $l$ and $l'$ lying in $ \widetilde Y_i$. We then have
\begin{eqnarray*}
\e^d\Bigl|{\widetilde u^\e_k-\widetilde u^\e_{k'}\over\e}\Bigr|^p
\le C\sum_{n=1}^N\e^{d}\Bigl|{u^\e_{l_n}- u^\e_{l_{n-1}}\over\e}\Bigr|^p\le
 C\sum_{j-j'\in P, j,j'\in \widetilde Y_i\cap C}\e^{d}\Bigl|{u^\e_j- u^\e_{j'}\over\e}\Bigr|^p.
\end{eqnarray*}
Summing up in $k,k'$ we obtain
\begin{eqnarray*}
\sum_{|k-k'|=1, k\in Y_i}\e^d\Bigl|{\widetilde u^\e_k-\widetilde u^\e_{k'}\over\e}\Bigr|^p
\le C T^d \sum_{j-j'\in P, j,j'\in \widetilde Y_i\cap C}\e^{d}\Bigl|{u^\e_j- u^\e_{j'}\over\e}\Bigr|^p.
\end{eqnarray*}
and
\begin{eqnarray}\label{wupe}
\sum_{|k-k'|=1, k,k'\in {1\over\e}\widetilde\Omega_\e}\e^d\Bigl|{\widetilde u^\e_k-\widetilde u^\e_{k'}\over\e}\Bigr|^p
\le C D^dT^d \sum_{(j,j')\in N^\e_C(\Omega)}\e^{d}\Bigl|{u^\e_j- u^\e_{j'}\over\e}\Bigr|^p,
\end{eqnarray}
where
$$
\widetilde\Omega_\e= \bigcup\Bigl\{ \e Y_i: \hbox{(\ref{disto}) holds} \Bigr\}.
$$

Trivially, we also have the estimate
$$
\sum_{k\in Y_i}|\widetilde u^\e_k|\le
\sum_{k\in Y_i\cap C}| u^\e_k|+ \sum_{k\in Y_i\setminus C}|\widetilde u^\e_k|
={T^d\over \#(C\cap Y)}\sum_{k\in Y_i\cap C}| u^\e_k|.
$$
These two estimates ensure the pre-compactness of $\widetilde u^\e$ in $L^1(\Omega')$ for all $\Omega'\subset\!\subset\Omega$ and that every its cluster point is in $W^{1,p}(\Omega)$ by the uniformity of the estimates (\ref{wupe}) (see \cite{AC}).
\qed

\begin{theorem}[homogenization on discrete perforated domains]\label{hdpd}
The energies ${\cal F}_\e$ defined in {\rm(\ref{cale})} $\Gamma$-converge with respect to the $L^1_{\rm loc}(\Omega;\rr^m)$ topology to the energy
\begin{equation}
{\cal F}_{\hom}(u)=\int_\Omega f_{\hom}(\nabla u)\dx,
\end{equation}
defined on $W^{1,p}(\Omega;\rr^m)$, where the energy density $f_{\hom}$ satisfies
\begin{equation}\label{foro}
f_{\hom}(\xi)=\lim_{K\to+\infty} \inf\Bigl\{ {\cal F}(\xi x+v,(0,K)^d): v_k=0
\hbox{ in a neighborhood of } \partial (0,K)^d\Bigr\}.
\end{equation}
\end{theorem}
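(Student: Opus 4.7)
\medskip

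\noindent\textbf{Proof plan.}
The strategy is the standard cell-formula/blow-up scheme for $\Gamma$-convergence of periodic discrete energies, adapted to the perforated setting in the spirit of \cite{ACDP,AC,BCP}. First, Lemma \ref{exc} shows that any sequence $u^\e$ with uniformly bounded ${\cal F}_\e(u^\e)$ admits extensions $\widetilde u^\e$ converging along a subsequence in $L^1_{\rm loc}(\Omega;\rr^m)$ to some $u\in W^{1,p}(\Omega;\rr^m)$, which identifies the effective domain of the $\Gamma$-limit. To show that the limit in (\ref{foro}) (normalized by $K^{-d}$) exists, set $m_K(\xi)=K^{-d}\inf\{{\cal F}(\xi x+v,(0,K)^d):v_k=0\hbox{ near }\partial (0,K)^d\}$; tiling $(0,NK)^d$ by translates of $(0,K)^d$ and gluing the corresponding near-optimal test functions (legitimate since they vanish near each sub-cube boundary) gives $m_{NK}(\xi)\le m_K(\xi)+o(1)$ as $K\to\infty$. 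Combined with (\ref{c1}), this produces $f_{\hom}(\xi)=\lim_K m_K(\xi)$ with the $p$-growth and $p$-Lipschitz regularity inherited from $f$.

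\emph{Lower bound.} For $u^\e\to u$ with $\sup_\e {\cal F}_\e(u^\e)<+\infty$, pass to the extended sequence $\widetilde u^\e$ and consider the localized energy measures obtained by distributing $\e^d f(k,k',(u^\e_k-u^\e_{k'})/\e)$ on the segments $[\e k,\e k']$. After extracting a weak-$*$ limit $\mu\ge 0$, apply the blow-up method of Fonseca--M\"uller at a Lebesgue point $x_0$ of $\nabla u$: rescale on a cube $Q_\rho(x_0)$, modify $\widetilde u^\e$ near $\partial Q_\rho(x_0)$ so that it matches the affine map $\nabla u(x_0)(x-x_0)$ via a De Giorgi slicing along concentric shells (choosing by pigeonhole a shell of small energy), and recognise the modified competitor as admissible in a rescaled cell problem. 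This yields $d\mu/d{\cal L}^d(x_0)\ge f_{\hom}(\nabla u(x_0))$ for a.e. $x_0$, hence the liminf inequality.

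\emph{Upper bound.} By density of piecewise-affine maps in $W^{1,p}$ and the $p$-Lipschitz continuity of $f_{\hom}$, it suffices to build a recovery sequence for an affine target $u(x)=\xi x+b$. Choose near-optimal $v^K$ in the definition of $m_K(\xi)$, extend $v^K$ by $KT$-periodicity to all of $\ZZ^d$ (the zero boundary condition makes adjacent periodic cells interact without extra energy), and define $u^\e_k=\xi(\e k)+\e v^K(k)$ on $C^\e(\Omega)$, filling in on the remaining sites by the averaging procedure of Lemma \ref{exc}. One checks that ${\cal F}_\e(u^\e)\to |\Omega|\,m_K(\xi)$, and a diagonal extraction in $K\to+\infty$ closes the bound; for a piecewise-affine target one pastes such constructions on each affine piece with a thin safety corridor near the interfaces.

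The main obstacle I anticipate is the lower bound, specifically the modification of $\widetilde u^\e$ on a thin shell near $\partial Q_\rho(x_0)$ to produce affine boundary data without introducing spurious contributions from the long-range interactions in $P$ across the perforated set $C$, while keeping the modification's own energy negligible. This is handled by combining the extension in Lemma \ref{exc} with a pigeonhole selection of a low-energy shell, in the spirit of the Acerbi--Chiad\`o Piat--Dal Maso--Percivale argument \cite{ACDP}.
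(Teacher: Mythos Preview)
Your proposal is correct in outline and would yield the result, but it takes a substantially different route from the paper. The paper does not redo the blow-up argument: instead it \emph{reduces} to the known homogenization theorem of Alicandro--Cicalese \cite{AC} for the full lattice $\ZZ^d$. Two reductions are offered. The first observes that the proof in \cite{AC} uses nearest-neighbour coerciveness only to secure pre-compactness of bounded-energy sequences, and that Lemma \ref{exc} supplies exactly this in the perforated setting; the rest of the \cite{AC} argument then goes through verbatim. The second reduction is a perturbation trick: one applies \cite{AC} directly to the coercive functionals ${\cal F}^\eta_\e={\cal F}_\e+\eta\,G$, where $G$ is the full nearest-neighbour $p$-energy on $\ZZ^d$, obtaining $\Gamma$-limits ${\cal F}^\eta_{\hom}$; since ${\cal F}^\eta_\e$ is monotone in $\eta$ one recovers ${\cal F}_{\hom}=\inf_{\eta>0}{\cal F}^\eta_{\hom}$ by comparison, and then identifies the cell formula (\ref{foro}) by standard representation arguments.

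Your direct blow-up/slicing argument is self-contained and makes the mechanism explicit, which has pedagogical value; the paper's approach is much shorter because it outsources all the technical work (blow-up, De Giorgi slicing, recovery sequences) to \cite{AC} and isolates the one genuinely new ingredient, namely Lemma \ref{exc}. If you want to match the paper's economy, replace your lower- and upper-bound sections by a one-line citation to \cite{AC} together with the $\eta$-perturbation argument.
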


\Proof The proof follows the one in the case $C=\ZZ^d$ contained in \cite{AC}, and therefore we have the coerciveness condition $f(k,k',z)\ge C(|z|^p-1)$ whenever $|k-k'|=1$. That condition is used only to obtain pre-compactness of sequences with bounded energy, and is substituted by the previous lemma.

The proof can also be obtained by directly using the homogenization result of \cite{AC} applied to
${\cal F}^\eta_\e= {\cal F}_\e+\eta\, G$, where
$$
G(u)=\sum_{|k-k'|=1, k,k'\in{1\over\e}\Omega} \e^d\Bigl|{ u_k-u_{k'}\over\e}\Bigr|^p,
$$
obtaining limit energies $${\cal F}^\eta_{\hom}(u)=\int_\Omega f^\eta_{\hom}(\nabla u)\dx.$$ By comparison we obtain the existence of the desired
$\Gamma$-limit and the equality
$$
{\cal F}_{\hom}(u)=\inf_{\eta>0} {\cal F}^\eta_{\hom}(u)=\int_\Omega \inf_{\eta>0}
 f^\eta_{\hom}(\nabla u)\dx.
$$
Once this integral representation is shown to hold, standard arguments allow to conclude the validity of formula (\ref{foro}) (see \cite{BDF}).
\qed

\section{Definition of the interaction term}
The homogenization result in Theorem \ref{hdpd} will describe the contribution of the hard phases to the limiting behavior of energies $F_\e$. We now characterize their interactions with the soft phase.

For all $M$ positive integer and $z_1,\ldots, z_N\in \rr^m$  we define the minimum problem
\begin{equation}
\varphi_M(z_1,\ldots, z_N) ={1\over M^d}
\inf\Bigl\{ \sum_{(k,k')\in N_0(Q_M)} f(k,k', {v_k-v_{k'}})+\sum_{k\in Z(Q_M)} g(k, v_k): v\in{\cal V}_M\Bigr\},
\end{equation}
where
\begin{equation}
Q_M=\Bigl[-{M\over 2},{M\over 2}\Bigr)^d, \qquad N_0(Q_M)= N_0\cap (Q_M\times Q_M),\qquad Z(Q_M)= \ZZ^d\cap Q_M,
\end{equation}
and the infimum is taken over the set ${\cal V}_M= {\cal V}_M(z_1,\ldots, z_N)$ of all $v$ that are constant on each connected component of $A_j\cap Q_M$ and $v=z_j$
on $C_j$ for $j=1,\ldots N$.

\begin{proposition}\label{vf}
There exists the limit $\varphi$ of $\varphi_M$ uniformly on compact subsets of $\rr^{mN}$.
\end{proposition}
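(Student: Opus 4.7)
The plan is to show that the family $\{\varphi_M\}$ is equi-Lipschitz on compact subsets of $\rr^{mN}$ and converges pointwise; combined via Arzel\`a--Ascoli these yield uniform convergence on compacts.

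First I would establish uniform bounds. Exhibiting an admissible competitor (e.g.\ $v_k=z_j$ on $A_j$ for $j\ge1$, $v_k=0$ on $A_0$, modified to be constant on finite components of $A_j$ by taking an appropriate average) together with (\ref{c1}), (\ref{c4}) gives $0\le\varphi_M(\mathbf z)\le C(1+|\mathbf z|^p)$ uniformly in $M$. For equi-Lipschitz continuity on a compact $K\subset\rr^{mN}$, fix $\mathbf z,\mathbf z'\in K$ and an $\eta$-minimizer $v$ for $\varphi_M(\mathbf z)$, and set $v'_k=v_k+(z'_j-z_j)$ whenever $k\in C_j$, $v'_k=v_k$ otherwise. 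Since every connected component of $A_j\cap Q_M$ that meets $C_j$ is contained in $C_j$ ($P_j$-connectedness inside $A_j\cap Q_M$ implies $P_j$-connectedness inside $A_j$), $v'$ remains constant on each component and lies in ${\cal V}_M(\mathbf z')$. Applying (\ref{c2}) on the $N_0$-bonds incident to $\bigcup_j C_j$ together with a H\"older bound exploiting $\sum f\le C(1+|\mathbf z|^p)M^d+\eta$, and (\ref{c5}) on the $g$-term (modified only on sites of $\bigcup_j C_j$), gives
$$
|\varphi_M(\mathbf z)-\varphi_M(\mathbf z')|\le C_K|\mathbf z-\mathbf z'|
$$
uniformly in $M$.

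Pointwise convergence would follow from an almost-subadditive estimate of standard homogenization type. Fix $\mathbf z$ and $M_0\gg1$. For $N=LM_0$ with $L$ large, tile $Q_N$ by translates $Q_{M_0}^{(i)}$ carrying each an $\eta$-optimal competitor $v^{(i)}$ for $\varphi_{M_0}(\mathbf z)$. Build $v$ on $Q_N$ by using $v^{(i)}$ in the interior of each sub-cube and resetting $v_k=z_j$ for $k\in A_j$, $j\ge1$, on a shell of bounded width $R$ (larger than every $|y|$ with $y\in P_0\cup\cdots\cup P_N$) around each sub-cube boundary, with $v_k=0$ on $A_0$ in the shells and an additional reset to $z_j$ on every finite component of $A_j\cap Q_N$ meeting a shell, so that $v$ is constant on each component of $A_j\cap Q_N$. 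Because $T$-periodicity of $J$ forces any finite component of $A_j$ to contain at most $T^d$ sites (otherwise a translated copy would extend it to infinity), the total number of sites, and of ${\cal N}_0(Q_N)$-bonds, affected by the shell correction is $O(N^d/M_0)$; each contributes energy $\le C(1+|\mathbf z|^p)$ by (\ref{c1}), (\ref{c4}). Dividing by $N^d$,
$$
\varphi_N(\mathbf z)\le\varphi_{M_0}(\mathbf z)+\frac{C(1+|\mathbf z|^p)}{M_0}+\eta,
$$
so $\limsup_N\varphi_N(\mathbf z)\le\liminf_{M_0}\varphi_{M_0}(\mathbf z)$ and the pointwise limit $\varphi(\mathbf z)$ exists.

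The main obstacle is the gluing step in the almost-subadditivity. The admissibility condition ``$v$ constant on each connected component of $A_j\cap Q_N$'' is nonlocal, so naive concatenation of sub-cube minimizers fails on finite $A_j$-components straddling a sub-cube boundary: their two pieces carry possibly different constants in the sub-cubes but must carry a single constant in $Q_N$. Resetting $v$ to $z_j$ on all such straddling components restores admissibility, crucially using that $C_j$ is the \emph{unique} infinite $P_j$-component of $A_j$ so that $z_j$ is a consistent choice compatible with the forced value on $C_j$. Combining this pointwise limit with the equi-Lipschitz estimate completes the proof of uniform convergence on compact subsets of $\rr^{mN}$.
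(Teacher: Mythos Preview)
Your strategy for equiboundedness and equicontinuity is essentially the same as the paper's, but your route to pointwise convergence is genuinely different: you argue by an almost-\emph{subadditive} gluing (tile $Q_N$ with near-minimizers on sub-cubes of side $M_0$, reset in shells), whereas the paper argues by \emph{restriction}. Specifically, the paper observes that if $v\in{\cal V}_{KM}$ then its restriction to each translated copy of $Q_M$ lies in (a translate of) ${\cal V}_M$, and since every term is nonnegative this immediately yields $\varphi_{KM}\ge\varphi_M$; a second trivial restriction bound $M^d\varphi_M\le K^d\varphi_K$ for $M\le K$ handles non-multiples, and the two combine to give $\liminf_K\varphi_K\ge\limsup_M\varphi_M$. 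The restriction direction is clean here precisely because the constraint ``constant on each connected component of $A_j\cap Q$'' is \emph{inherited} by sub-cubes (every component of $A_j\cap Q_M$ sits inside a component of $A_j\cap Q_{KM}$), so no repair work is needed.

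Your gluing argument, by contrast, has a gap at the step where you assert that ``each [affected site/bond] contributes energy $\le C(1+|\mathbf z|^p)$''. This is fine for bonds with both endpoints in the reset region, and for the $g$-term on reset sites. But for a bond $(k,k')\in N_0(Q_N)$ with $k$ in the shell (so $v_k\in\{0,z_1,\dots,z_N\}$) and $k'$ just outside (so $v_{k'}=v^{(i)}_{k'}$ is the near-minimizer value), you need a bound on $|v^{(i)}_{k'}|$. Nothing in the definition of ${\cal V}_{M_0}$ or in the energy bound controls the values of a near-minimizer on $A_0$-sites, or on finite $A_j$-components, that are not connected through $N_0\cup\cdots\cup N_N$ to any $C_j$: on such pieces the minimum problem decouples, is independent of $\mathbf z$, and admits near-minimizers with arbitrarily large values. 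This can be repaired --- e.g.\ by first fixing the near-minimizer to take a specific bounded value on every such disconnected piece (these pieces have at most $T^d$ sites by the same pigeonhole argument you used for finite $A_j$-components, so thickening the shell by a further $O(T)$ absorbs any that straddle) --- but as written the bound is not justified. The paper's monotonicity-by-restriction argument sidesteps the whole issue.
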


\Proof Note preliminarily that by the positive homogeneity condition for $f$ we have
$$
|f(k,k',z)- f(k,k', z')|\le C|z-z'|(|z|^{p-1}+|z'|^{p-1})
$$
for $(k,k')\in N_0$. Let $v$ be a test function for $\varphi_M(z_1,\ldots, z_N)$.  In order to estimate $\varphi_M(z'_1,z_2,\ldots, z_N)$ we use as test function
$$
v'_k=\cases{ z'_1 & if $k\in C_1$\cr
v_k & otherwise.}
$$
We then have
\begin{eqnarray*}
&&\biggl|\sum_{(k,k')\in N_0(Q_M)} f(k,k', {v'_k-v'_{k'}})+\sum_{k\in Z(Q_M)} g(k, v'_k)\\
&&\qquad\qquad-
\sum_{(k,k')\in N_0(Q_M)} f(k,k', {v_k-v_{k'}})-\sum_{k\in Z(Q_M)} g(k, v_k)\biggr|
\\
&\le& 2\sum_{(k,k')\in N_0(Q_M), k\in C_1}\Bigl| f(k,k', {v'_k-v'_{k'}})-
f(k,k', {v_k-v_{k'}})\Bigr|+\
\sum_{k\in C_1\cap Q_M}|g(k, z_1)-g(k, z_1')\Bigr|
\\
&\le& 2\sum_{(k,k')\in N_0(Q_M), k\in C_1}\Bigl| f(k,k', {z'_1-v_{k'}})-
f(k,k', {z_1-v_{k'}})\Bigr|+\
\sum_{k\in C_1\cap Q_M}|g(k, z_1)-g(k, z_1')\Bigr|
\end{eqnarray*}
By (\ref{c5}) the second sum can be simply estimated by $CM^d|z_1-z_1'|\Bigl(1+|z_1|^{p-1}+|z_1'|^{p-1}\Bigr)$.
As for the first sum, we have
\begin{eqnarray*}
&&\sum_{(k,k')\in N_0(Q_M), k\in C_1}\Bigl| f(k,k', {z'_1-v_{k'}})-
f(k,k', {z_1-v_{k'}})\Bigr|\\
&\le& \sum_{(k,k')\in N_0(Q_M), k\in C_1}
C|z_1-z_1'|\Bigl(|z'_1-z_1|^{p-1}+|v_k-v_{k'}|^{p-1}\Bigr)\\
&\le&
CM^d|z_1-z_1'|^p +C|z_1-z_1'|\sum_{(k,k')\in N_0(Q_M), k\in C_1}|v_k-v_{k'}|^{p-1}\\
&\le&
CM^d|z_1-z_1'|^p +C|z_1-z_1'|M^{d\over p}\Bigl(\sum_{(k,k')\in N_0(Q_M), k\in C_1}|v_k-v_{k'}|^{p}\Bigr)^{{p-1\over p}}
\\
&\le&
CM^d|z_1-z_1'|^p +C|z_1-z_1'|M^{d\over p}\Bigl(\sum_{(k,k')\in N_0(Q_M), k\in C_1}f(k,k',v_k-v_{k'})\Bigr)^{{p-1\over p}}.
\end{eqnarray*}
By the arbitrariness of $v$, taking infima we conclude that
\begin{eqnarray*}
&&|\varphi_M(z'_1,\ldots, z_N)-\varphi_M(z_1,\ldots, z_N)|\\
&&\le C|z_1-z_1'|\Bigl(|z_1-z_1'|^{p-1}+ (\varphi_M(z_1,\ldots, z_N) )^{{p-1\over p}}+\Bigl(1+|z_1|^{p-1}+|z_1'|^{p-1}\Bigr)\Bigr).
\end{eqnarray*}
Furthermore, by taking as test function $v=0$ on the complement of the $\bigcup_jC_j$ we have the estimate
$$
\varphi_M(z_1,\ldots, z_N)\le C\Bigr(1+\sum_j|z_j|^p\Bigl).
$$
These estimates give equiboundedness and equicontinuity of the family $\varphi_M$ on bounded subsets.
By Ascoli-Arzel\`a's theorem, to conclude it suffices to show that the whole sequence $\varphi_M$ converges point wise. To this end, we note that for integer $K$ and $M$ we have\nobreak

(i) $\varphi_{KM}\ge \varphi_M$;

(ii) $M^d\varphi_M\le K^d\le \varphi_K$ if $M\le K$.

\goodbreak
By (i), with fixed $M$ the sequence $\varphi_{M2^k}$ is increasing, and in particular
\begin{equation}\label{inco}
\varphi_{M2^k}\ge \varphi_M
\end{equation}
for all $k$.

Let $k$ be fixed; for all $K$ let $L_K=\lfloor K/M2^k\rfloor$, so that
$$
0\le K-L_KM2^k\le M2^k.
$$
Then, by (ii)
$$
({L_KM2^k})^d\varphi_{L_KM2^k}\le K^d \varphi_K,
$$
and by (\ref{inco})
$$
\varphi_K\ge \Bigl({L_KM2^k\over K}\Bigr)^d\varphi_{L_KM2^k}\ge  \Bigl({L_KM2^k\over K}\Bigr)^d\varphi_{M2^k}\ge  \Bigl({L_KM2^k\over K}\Bigr)^d\varphi_{M}.
$$
By taking first the liminf in $K$ and then the limsup in $M$ we obtain
$$
\liminf_K \varphi_K\ge \limsup_M \varphi_{M};
$$
that is, the thesis.
\qed

\begin{remark}\label{frame}\rm
Let $u^M\in {\cal V}_M$ be a sequence such that
$$
\lim_M{1\over M^d}\Bigl(\sum_{(k,k')\in N_0(Q_M)} f(k,k', {u^M_k-u^M_{k'}})+\sum_{k\in Z(Q_M)} g(k, u^M_k)\Bigr)=\varphi(z_1,\ldots, z_N) $$
then for every sequence of constants $R_M= o(M)$ we have
$$
\lim_M{1\over M^d}
\sum_{k,k'\in Q_M\setminus Q_{M-R_M}: k-k'\in P_0}  |u^M_k-u^M_{k'}|^p=0.
$$
Indeed, otherwise taking $u^M$ as test function for the problem defining $\varphi_{M-R_M}(z_1,\ldots, z_N)$,
we would obtain
$$
\limsup_M\varphi_{M-R_M}(z_1,\ldots, z_N)<\varphi(z_1,\ldots, z_N),
$$
which is a contradiction.
\end{remark}

We now prove that the function $\varphi$ introduced in Proposition \ref{vf} can be defined through minimum problems with additional boundary data. This will be useful in the computation of the upper bound for the $\Gamma$-limit.
We then define the boundary set of $Q_M$ as follows: we consider $R$ a fixed constant
such that for any two points $k$ and $k'\in Q_{M-R}$ connected in terms of $P_0$-interactions there exists a path
of $P_0$-interacting points contained in $Q_M$,
and $R$ larger than twice the diameter of each bounded connected component of any $A_j$ for $j=1,\ldots,N$.
We define $B_M$ as
\begin{eqnarray*}
B_M&=&\Bigl((Q_M\setminus Q_{M-R})\setminus \bigcup_{j=1}^NA_j\Bigr)\\
&&\cup\,\bigcup\{B: B\subset Q_M\setminus Q_{M-R} \hbox{ bounded connected component of } A_j\cap Q_M, j=1,\ldots,N\}.
\end{eqnarray*}
With this definition, we can set
\def\wif{\widetilde \varphi}
\begin{equation}\label{wif}
\wif_M(z_1,\ldots,z_N)={1\over M^d}
\inf\Bigl\{ \sum_{(k,k')\in N_0(Q_M)} f(k,k', {v_k-v_{k'}})+\sum_{k\in Z(Q_M)} g(k, v_k): v\in {\cal V}_M, v=0\hbox{ on } B_M\Bigr\}.
\end{equation}

\begin{proposition}
There exists the limit
$$
\lim_M \wif_M(z_1,\ldots,z_N)=\varphi(z_1,\ldots,z_N),
$$
uniformly on bounded subsets of $\rr^{mN}$, where $\varphi$ is defined in Proposition {\rm\ref{vf}}.
\end{proposition}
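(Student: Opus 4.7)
The inequality $\widetilde\varphi_M(z) \geq \varphi_M(z)$ is immediate because $\widetilde\varphi_M$ is defined by an infimum over a strictly smaller admissible class (adding the constraint $v = 0$ on $B_M$); by Proposition \ref{vf} this already yields $\liminf_M \widetilde\varphi_M(z) \geq \varphi(z)$, so only the reverse inequality requires work. The plan is to obtain it by a tiling/subadditivity argument in the spirit of Proposition \ref{vf}, exploiting the fact that the homogeneous boundary datum $v = 0$ on $B_M$ is precisely what makes it possible to glue test functions defined on translated copies of $Q_M$ without creating incompatibilities at the tile interfaces.

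Concretely, for $M$ a multiple of $T$ and $K$ a large integer, I would partition $Q_{KM}$ into $K^d$ translates $Q_M^{(i)} = Mi + Q_M$, pick in each tile a near-optimal test function $v^{(i)}$ for $\widetilde\varphi_M(z)$ (assumed a priori bounded by some $R_0$ depending only on $\max_j|z_j|$, via a truncation argument that uses (\ref{c2}) to control the energy cost), and set $\tilde v = v^{(i)}$ on $Q_M^{(i)}$. The crucial step is to verify that $\tilde v \in {\cal V}_{KM}$, so that it is a competitor for $\varphi_{KM}(z)$. On each $C_j$ the value $z_j$ matches across tiles by construction; for a bounded connected component $B$ of $A_j \cap Q_{KM}$ straddling a tile face, the condition $R > 2\,\mathrm{diam}(B)$ forces $B$ into the union of the outer shells $Q_M^{(i)} \setminus Q_{M-R}^{(i)}$ of the tiles it meets, so the trace of $B$ in each such tile is a bounded component of $A_j \cap Q_M^{(i)}$ contained in the outer shell, hence lies in $B_M^{(i)}$, where $v^{(i)} = 0$; thus $\tilde v \equiv 0$ on $B$ consistently. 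The other defining property of $R$ (the existence of a $P_0$-path inside $Q_M$ connecting any two $P_0$-connected points of $Q_{M-R}$) ensures that no spurious interactions appear when reinterpreting pair interactions across tile faces.

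The energy of $\tilde v$ decomposes into bonds internal to some tile, summing to at most $K^d(M^d \widetilde\varphi_M(z) + \varepsilon_M)$ where $\varepsilon_M$ quantifies near-optimality, plus interface bonds whose endpoints are within the range of $P_0$ of the common face and hence in the outer shells. On the outer shells $\tilde v$ takes values of size at most $R_0$, so each of the $O(K^d M^{d-1})$ interface bonds contributes a bounded amount to the $f$-energy, while the $g$-term splits cleanly over the tiles. This gives $\varphi_{KM}(z) \leq \widetilde\varphi_M(z) + \varepsilon_M/M^d + C/M$; letting $K \to \infty$ (so $\varphi_{KM} \to \varphi$ by Proposition \ref{vf}), then $\varepsilon_M \to 0$ and finally $M \to \infty$, we obtain $\limsup_M \widetilde\varphi_M(z) \leq \varphi(z)$, completing the convergence. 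The main technical obstacle is the preliminary $L^\infty$ bound on the near-optimal test function, which requires a careful truncation exploiting (\ref{c2}) (and is what makes the total interface cost $O(1/M)$); uniform convergence on bounded subsets of $\rr^{mN}$ then follows from the same equi-continuity estimate already established in the proof of Proposition \ref{vf}.
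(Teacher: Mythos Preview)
Your tiling argument has the inequality pointing in the wrong direction. You correctly observe that the boundary constraint $v=0$ on $B_M$ lets you glue near-optimal competitors for $\widetilde\varphi_M$ into an admissible $\tilde v\in\mathcal V_{KM}$, and this indeed yields
\[
\varphi_{KM}(z)\ \le\ \widetilde\varphi_M(z)+\frac{\varepsilon_M}{M^d}+\frac{C}{M}.
\]
But sending $K\to\infty$ turns this into $\varphi(z)\le \widetilde\varphi_M(z)+o_M(1)$, i.e.\ $\liminf_M\widetilde\varphi_M(z)\ge\varphi(z)$, which is exactly the inequality you already dismissed as trivial (it follows from $\widetilde\varphi_M\ge\varphi_M$). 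Your concluding step, ``we obtain $\limsup_M\widetilde\varphi_M(z)\le\varphi(z)$'', simply does not follow from the displayed inequality; the implication is reversed.

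The issue is structural: to bound $\widetilde\varphi_M$ \emph{from above} you must manufacture competitors \emph{for} the constrained problem $\widetilde\varphi_M$, not competitors built \emph{out of} $\widetilde\varphi_M$-near-minimizers. The paper proceeds in precisely this direction: it takes a minimizer $v^M$ of the \emph{unconstrained} problem $\varphi_M$, overwrites its values on $B_M$ by $0$ (and by fixed near-minimizers $w^l$ on the isolated islands), and estimates the resulting energy increment. The crucial input is Remark~\ref{frame}, which says that near-optimal sequences for $\varphi_M$ carry asymptotically negligible interaction energy in any boundary layer $Q_M\setminus Q_{M-R_M}$ with $R_M=o(M)$; together with a Poincar\'e-type inequality this controls the cost of forcing the boundary datum and gives $\widetilde\varphi_M\le\varphi_M+o(1)$ directly. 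No tiling, and no a~priori $L^\infty$ truncation of test functions, is needed.
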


\Proof By the same argument as in Proposition {\rm\ref{vf}} we may show that the sequence is equibounded and equicontinuous on bounded sets. It is then sufficient to show the existence of the pointwise limit, and that this coincides with that of $\varphi_M$. To this end we will estimate $\wif_M$ in terms of $\varphi_M$.

Note that we may write $\varphi_M(z_1,\ldots,z_N)$ as the sum of two independent minimum problems, the first one
where only $k$ and $k'$ connected with $\bigcup_{j=1}^N C_j$ in $Q_M$ are taken into account, and the second one where the summation is done over all other indices (disconnected with $\bigcup_{j=1}^N C_j$). Note that the first one is actually a minimum, of which we choose a minimizer $v^M$, while the second one may be only an infimum. The latter infimum can be further decomposed into a sum of disjoint infimum problems over bounded connected component, the ones intersecting $Q_{M-R}$ being $T\ZZ^d$-translations of a finite family $\{I_l\}$ of subsets of $\ZZ^d$ by our choice of $R$; i.e.,
their value is
\begin{equation}\label{il}
\inf\Bigl\{ \sum_{(k,k')\in N_0(I_l)} f(k,k', {v_k-v_{k'}})+\sum_{k\in I_l} g(k, v_k): v:I_l\to\rr^m
\Bigr\},
\end{equation}
where the infimum is taken on those $v$ that are constant on each component of $A_j\cap I_l$ for $j=1,\ldots, N$.
This value is independent of $M$ and $z_1,\ldots, z_N$.
We denote by $w^l$ a ${1\over M}$-almost minimizer of problem (\ref{il}).

We define $\widetilde v^M\in {\cal V}_M$ with $\widetilde v^M=0$ in $B_M$ by setting
$$
\widetilde v^M_k=\cases{ 0 & if $k\in B_M$\cr
w^l_{k-K} & if $k\in K+I_l$ for some $K\in T\ZZ^d$ and $K+I_l\cap Q_{M-R}\neq \emptyset$\cr
v^M_k & otherwise.}
$$
Using $\widetilde v^M$ as a test function we can estimate, recalling (\ref{c3}),  (\ref{c1}) and  (\ref{c4}),
\begin{eqnarray*}
\wif_M(z_1,\ldots,z_N)&\le&\varphi_M(z_1,\ldots,z_N)\\
&&+{C\over M^d}\Bigl( \sum_{k {\scriptstyle\,\rm or\,} k'\in B_M, k-k'\in P_0}|\widetilde v^M_k-\widetilde v^M_{k'}|^p+
\sum_{k\in B_M} g(k,0)+M^{d-1}+\#(A_j\cap B_M)\Bigr)\\
&\le&\varphi_M(z_1,\ldots,z_N)+{C\over M^d}\Bigl( \sum_{k\not\in B_M: \exists k'\in B_M, k-k'\in P_0}|\widetilde v^M_k|^p+\#(B_M)\Bigr).
\end{eqnarray*}
By Poincar\`e inequality the sum can be estimated as
$$
\sum_{k\not\in B_M: \exists k'\in B_M, k-k'\in P_0}|\widetilde v^M_k|^p\le
C\Bigl(\#(B_M)+\sum_{k,k'\in Q_M\setminus Q_{M-2R}: k-k'\in P_0} |v^M_k-v^M_{k'}|^p\Bigl).
$$
Since this last sum tends to $0$ as $M\to+\infty$ by Remark \ref{frame}, we obtain
$$
\wif_M(z_1,\ldots,z_N)\le\varphi_M(z_1,\ldots,z_N)+o(1).
$$
Since the opposite inequality $\wif_M(z_1,\ldots,z_N)\ge\varphi_M(z_1,\ldots,z_N)$ trivially holds, we get
that $$\lim_M(\wif_M(z_1,\ldots,z_N)-\varphi_M(z_1,\ldots,z_N))=0$$
as desired.
\qed

\section{Statement of the convergence result}
We now have all the ingredients to characterize the asymptotic behavior of $F_\e$.

Thanks to the compactness Lemma \ref{exc}, we may define the convergence
\begin{equation}\label{conva}
u^\e\to (u_1,\ldots, u_N)
\end{equation}
as the $L^1_{\rm loc} (\Omega;\rr^m)$ convergence $\widetilde u^\e_j\to u_j$ of the extensions
of the restrictions of $u^\e$ to $C_j$, which is a compact convergence as ensured by that lemma.

The total contribution of the hard phases will be given separately by the contribution on the infinite connected components and the finite ones.
The first one is obtained by
computing independently the limit relative to each component
\begin{eqnarray}\label{calej}
F^j_\e(u)=\sum_{(k,k')\in N^\e_j(\Omega)} \e^d f\Bigl(k,k', {u_k-u_{k'}\over\e}\Bigr)\,,
\end{eqnarray}
where
\begin{equation}
N^\e_j(\Omega)= \Bigl\{ (k,k')\in (C_j\times C_j)\cap {1\over \e} (\Omega\times\Omega): k-k'\in P_j, k\neq k'\Bigr\},
\end{equation}
which is characterized by Theorem \ref{hdpd} as
\begin{equation}\label{fjh}
F^j_{\hom} (u)=\int_\Omega f^j_{\hom} (\nabla u)\dx.
\end{equation}

In order to characterize the contribution of the finite connected components of $A_j$, we can write
\begin{equation}
A_j\setminus C_j=\bigcup_{l\in I_j}(A^j_l+ T\ZZ^d),
\end{equation}
where, due to the periodicity of the media, $l$ runs over a finite set of indices
$I_j$, and $A^j_l+ T\ZZ^d$ and $A^j_{l'}+ T\ZZ^d$ are $P_j$-disconnected if $l\neq l'$.
To each such $A^j_l$ we associate the minimum value
\begin{equation}\label{mji}
m^j_l=\min\Bigl\{\sum_{k,k'\in A^j_l, k-k'\in P_j} f(k,k', {z_k-z_{k'}}): z:A^j_l\to \rr^m\Bigr\}.
\end{equation}
Note that we have no boundary conditions for the test functions $z$. The total contribution of the disconnected components will simply give the additive constant $m|\Omega|$, where
\begin{equation}\label{m}
m={1\over T^d} \sum_{j=1}^N\sum_{l\in I_j} m^j_l.
\end{equation}

In the previous section we have introduced the energy density $\varphi$, which describes the interactions between the hard phases. Taking all contribution into account, we may state the following convergence result.

\begin{theorem}[double-porosity homogenization]\label{dph}
Let $\Omega$ be a Lipschitz bounded open set, and let $F_\e$ be defined by {\rm(\ref{fep})} with the notation of Section {\rm\ref{Not}}. Then there exists the $\Gamma$-limit of $F_\e$ with respect to the convergence {\rm(\ref{conva})}
and it equals
\begin{equation}\label{maineq}
F_{\hom}(u_1,\ldots, u_N)=\sum_{j=1}^N\int_\Omega f^j_{\hom} (\nabla u_j)\dx+m|\Omega|+
\int_\Omega \varphi(u_1,\ldots, u_N)\dx
\end{equation}
on functions $u=(u_1,\ldots, u_N)\in (W^{1,p}(\Omega;\rr^m))^N$,
where $\varphi$ is defined in Proposition {\rm \ref{vf}}, $f^j_{\hom}$ are defined by  {\rm(\ref{fjh})}, and
$m$ is given by {\rm(\ref{m})}.
\end{theorem}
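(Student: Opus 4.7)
The plan is to carry out the standard three-step $\Gamma$-convergence scheme: compactness for the convergence (\ref{conva}), a liminf inequality, and construction of recovery sequences. Compactness is immediate from Lemma \ref{exc} applied to the restriction of $u^\e$ to each infinite connected component $C_j$: the uniform bound on $F_\e(u^\e)$ controls the $p$-th power of the discrete gradient on each $N_j^\e$, so each extension $\widetilde u^\e_j$ is precompact in $L^1_{\rm loc}(\Omega;\rr^m)$ with $W^{1,p}$-cluster points.

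For the liminf inequality I would decompose the non-negative $F_\e(u^\e)$ into three pieces matching the three terms of $F_{\hom}$: (a) the hard-phase sums $F^j_\e$ restricted to interactions inside the infinite components $C_j$; (b) the interactions in $A_j\setminus C_j$, i.e.\ the sum of the energies on each bounded connected component $A^j_l+T\ZZ^d$; and (c) the remaining weak-phase sum over ${\cal N}_0^\e(\Omega)$ together with the entire zero-order term $\sum_{Z^\e(\Omega)}\e^d g(k,u^\e_k)$. Piece (a) yields $\sum_j\int_\Omega f^j_{\hom}(\nabla u_j)\dx$ by applying Theorem \ref{hdpd} component by component. Piece (b) is a simple counting argument: the number of $T\ZZ^d$-translates of each $A^j_l$ contained in $\e^{-1}\Omega$ is $\e^{-d}|\Omega|/T^d+O(\e^{-(d-1)})$ and each contributes at least $m^j_l$ by unconstrained minimization in (\ref{mji}), so after multiplication by $\e^d$ piece (b) has liminf at least $m|\Omega|$. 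For the delicate piece (c) I would partition $\Omega$ into cubes $Q$ of side $\e M$ centred at $\e T\ZZ^d$-nodes $x_Q$, observe that after the translation $k\mapsto k-x_Q/\e$ the restriction of $u^\e$ to $Q$ is an admissible competitor in a perturbation of the problem defining $\varphi_M$ with parameters $(u_1(x_Q),\ldots,u_N(x_Q))$, and use the positive $p$-homogeneity (\ref{c3}) together with the $L^p$-convergence $\widetilde u^\e_j\to u_j$ to show that replacing the actual values on $C_j\cap Q$ by the single target $u_j(x_Q)$ produces only vanishing error; a Riemann sum argument over the cubes followed by $M\to+\infty$ and Proposition \ref{vf} delivers $\int_\Omega\varphi(u_1,\ldots,u_N)\dx$.

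For the limsup inequality I would first reduce, by density of piecewise affine functions in $W^{1,p}$ and continuity of each term of $F_{\hom}$ under strong $L^p$-convergence of the $u_j$, to targets affine on each cube of a fine subdivision of $\Omega$, say $u_j(x)=\xi_j x+b_j$. On each such subdivision cube I tile $\e^{-1}\Omega$ with cubes $\e x_i+\e Q_M$, $x_i\in T\ZZ^d$, and on each tile define $u^\e$ by gluing three independent constructions: on $C_j$ take an almost minimizer of the cell formula (\ref{foro}) defining $f^j_{\hom}(\xi_j)$, translated by $b_j+\xi_j\e x_i$; on each bounded translate of $A^j_l$ lying in $Q_{M-R}$ take a minimizer of (\ref{mji}); and on the remaining weak sites and on the boundary set $B_M$ take an almost minimizer of $\wif_M(u_1(\e x_i),\ldots,u_N(\e x_i))$. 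The zero-boundary condition built into $\wif_M$ is the crucial compatibility tool: every interaction in $N_0$ that crosses two tiles has at least one endpoint in $B_M$ of one of them where $v=0$, so by (\ref{c3}) it contributes at most $C\,\#(B_M)/M^d$ per tile, which vanishes as $M\to+\infty$. Passing first $\e\to 0$, then $M\to+\infty$, then refining the affine subdivision yields the matching upper bound.

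The main obstacle is the liminf estimate for piece (c): the identification of an arbitrary low-energy $u^\e$ with a competitor for $\varphi_M$ is not exact, because the values of $u^\e$ on $C_j\cap Q$ are not constant but only close in $L^p$ to $u_j(x_Q)$. Controlling the replacement error rests on the Lipschitz-type estimate for $f(k,k',\cdot)$ on $N_0$ derived in the proof of Proposition \ref{vf} (itself a consequence of (\ref{c3})), which together with H\"older's inequality bounds the discrepancy by $C\|u^\e-u_j(x_Q)\|_{L^p(C_j\cap Q)}$ times the $(p-1)/p$ power of the weak-phase energy. Making this remainder uniformly small as $\e\to 0$ and then as the mesoscale $M\to+\infty$, while simultaneously ensuring that the Riemann-sum approximation of $\int_\Omega\varphi\dx$ converges, is the technical heart of the proof.
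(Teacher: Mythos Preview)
Your overall architecture matches the paper's: the same three-way split of $F_\e$ for the liminf, the same cube-wise reduction to $\varphi_M$ via replacement by constants on the hard components (the paper uses averages $u^{\e,i,j}$ rather than the target values $u_j(x_Q)$, but either works), and the same $\wif_M$-based gluing for the limsup.

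There is one genuine gap in your limsup argument. You claim that every cross-tile $N_0$ interaction has at least one endpoint in $B_M$ where the test function vanishes, and conclude a bound of order $\#(B_M)/M^d$. This is not correct. The set $B_M$ excludes all points of the infinite components $C_j$, so a pair $(k,k')\in N_0$ with $k\in C_j$, $k'\in C_{j'}$, $j\neq j'$, straddling two tiles has neither endpoint in $B_M$. Even when one endpoint does lie in $B_M$ (value $0$), the other may lie in $C_j$ with value $(u^j_\e)_k$, and then by (\ref{c3}) the interaction costs $C|(u^j_\e)_k|^p$, not $O(1)$. The total cross-tile contribution is therefore controlled by
\[
\sum_{i}\ \sum_{j=1}^N\ \sum_{k\in C_j\cap\bigl(iM+(Q_M\setminus Q_{M-R})\bigr)}\e^d\bigl(1+|(u^j_\e)_k|^p\bigr),
\]
which is an integral of $1+|\widetilde u^\e_j|^p$ over a set of measure $O(1/M)$. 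You need the equi-integrability of the recovery sequences $\widetilde u^\e_j$ (they converge strongly in $L^p$) to conclude that this vanishes as $M\to+\infty$ uniformly in $\e$; the zero boundary datum in $\wif_M$ alone does not do the job. This is exactly how the paper closes the estimate.
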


The proof of this result will be subdivided into a lower and an upper bound in the next sections.

\begin{remark}[non-homogeneous lower-order term]\rm
In our hypotheses the lower-order term $g$ depends on the fast variable $k$, which is integrated out in the limit.
We may easily include a measurable dependence on the slow variable $\e k$, by assuming $g= g(x,k,z)$ a Carath\'eodory function (this covers in particular the case $g=g(x,z)$ and substitute the last sum in (\ref{fep}) by
$$
\sum_{k\in Z^\e(\Omega)} \e^d g(\e k,k, u_k).
$$
Correspondingly, in Theorem \ref{dph} the integrand in the last term in (\ref{maineq}) must be substituted by
$\varphi(x,u_1,\ldots, u_N)$, where the definition of this last function is the same but taking $g(x,k,z)$ in place of $g(k,z)$, so that $x$ simply acts as a parameter.
\end{remark}

\begin{example}[simple one-dimensional energies]\label{exh}\rm
We give two examples of one-dimensional energies with a non-trivial double-porosity limit due to next-to-nearest neighbour interactions.

(1) We consider $d=1$, $\Omega=(0,1)$ and the energies
$$
\sum_{i=1}^{\lfloor1/\e\rfloor-1} \e\Bigl|{u_{i+1}-u_{i-1}\over \e}\Bigr|^2+ \e^2
\sum_{i=1}^{\lfloor1/\e\rfloor} \e\Bigl|{u_{i}-u_{i-1}\over \e}\Bigr|^2.
$$
In this case $C_1$ and $C_2$ are the sets of even and odd integers, and $C_0=\emptyset$.
We have $g=0$ and the definition of $\varphi$ is trivial; the limit is
$$
F_{\hom}(u_1,u_2)=2\int_{(0,1)}|u_1'|^2\,dx+ 2\int_{(0,1)}|u_2'|^2\,dx+ \int_{(0,1)}|u_1-u_2|^2\,dx
$$
(note the abuse of notation for $u_i$). Note that the second sum of the discrete energy can be interpreted as the $L^2$-norm of the difference between even and odd interpolations of $u$

(2) We consider $d=1$, $\Omega=(0,1)$ and the energies
$$
\sum_{i=0}^{{1\over 2}\lfloor1/\e\rfloor-2} \e\Bigl|{u_{2i+2}-u_{2i}\over \e}\Bigr|^2+ \e^2
\sum_{i=1}^{\lfloor1/\e\rfloor} \e\Bigl|{u_{i}-u_{i-1}\over \e}\Bigr|^2 +
\sum_{i=1}^{\lfloor1/\e\rfloor} \e |u_i- u^0_i|^2.
$$
In this case, $C_1$ is the set of even integers, $C_0$ is the set of odd integers, and we may take
$g(x,z)= |z-u_0(x)|^2$ (we take $u_0$ a fixed $L^2$-function and $\{u^0_i\}$
an interpolation strongly converging to $u_0$). Correspondingly,
$$
\varphi(x,u)= {1\over 3} |u-u_0(x)|^2,
$$
and the limit is
$$
F_{\hom}(u)=2\int_{(0,1)}|u'|^2\,dx+ {1\over 3}\int_{(0,1)}|u-u_0(x)|^2\,dx
$$
(in this case we only have one parameter in the continuum).
\end{example}

\section{Lower bound}
Let $u^\e$ be such that $\sup_\e F_\e(u^\e)<+\infty$ and $u^\e\to u=(u_1,\ldots, u_N)$ with respect to convergence {\rm(\ref{conva})}.

We may then rewrite
\begin{eqnarray}\nonumber
F_\e(u^\e)&\ge&\sum_{j=1}^N F^j_\e(u^\e) +\sum_{j=1}^N\sum_{A^j_l\subset {1\over\e}\Omega}
\sum_{\begin{array}{cc}\scriptstyle k,k'\in A^j_l\\ [-1mm]\scriptstyle
k-k'\in P_j\end{array}}\e^d f\Bigl(k,k', {u^\e_k-u^\e_{k'}\over\e}\Bigr)\\
&&+\sum_{Q^i_M\subset {1\over\e}\Omega}\e^d
\Biggl(\sum_{(k,k')\in N_0(Q^i_M)}\e^p f\Bigl(k,k',{u^\e_k-u^\e_{k'}\over\e}\Bigr)+\sum_{k\in Z(Q^i_M)} g(k, u^\e_k)\Biggr),\label{deco}
\end{eqnarray}
where
$$
Q^i_M=Q_M+Mi, \qquad N_0(Q^i_M)= N_0\cap (Q^i_M\times Q^i_M),\qquad Z(Q^i_M)= \ZZ^d\cap Q^i_M,
$$
for $i\in\ZZ^d$.

The second term in (\ref{deco}) is estimated by taking the minimum over all $z_k$ in the place of $u^\e_k/\e$,
obtaining
\begin{eqnarray}\nonumber
\sum_{j=1}^N\sum_{A^j_l\subset {1\over\e}\Omega}
\sum_{\begin{array}{cc}\scriptstyle k,k'\in A^j_l\\ [-1mm]\scriptstyle
k-k'\in P_j\end{array}}\e^d f\Bigl(k,k', {u^\e_k-u^\e_{k'}\over\e}\Bigr)&\ge&
\e^d\sum_{j=1}^N\sum_{A^j_l\subset {1\over\e}\Omega}  m^j_l\\
=\ \e^d\sum_{j=1}^N {|\Omega|\over\e^d T^d}\sum_{l\in I_J} m^j_l+o(1)&=& m|\Omega|+o(1).
\label{sete}
\end{eqnarray}

In order to estimate the last term in (\ref{deco}) we estimate separately
$$
\sum_{(k,k')\in N_0(Q^i_M)}\e^p f\Bigl(k,k',{u^\e_k-u^\e_{k'}\over\e}\Bigr)+\sum_{k\in Z(Q^i_M)} g(k, u^\e_k)
$$
for each fixed $i$. To this end, we consider the function $u^{\e,i}$ defined by
$$
u^{\e,i}_k={1\over \#(C_j\cap Q^i_M)} \sum_{l\in C_j\cap Q^i_M}u^\e_l=: u^{\e,i,j}\qquad \hbox{ if } k\in C_j\cap Q^i_M,
$$
$$
u^{\e,i}_k={1\over \#(A^j_l\cap Q^i_M)} \sum_{l\in A^j_l\cap Q^i_M}u^\e_l =: u^{\e,i,j}_l\qquad \hbox{ if } k\in A^j_l\cap Q^i_M
$$
for $j=1,\ldots, N$ and $l\in I_j$,
and $u^{\e,i}_k= u^\e_k$ if $k\in Q^i_M\setminus \bigcup_{j=1}^N A_j$.

We can now use Lemma \ref{l_pwc} with $u=u^\e$ and $v$ equal to the function defined by $u^{\e,i}$ on $Q^i_M$,
and note that
\begin{eqnarray*}
\sum_{k\in A_j}\e^d|u^\e_k-v_k|^p&=& \sum_i\Bigl(\sum_{k\in C_j\cap Q^i_M}\e^d|u^\e_k-v_k|^p+
\sum_{k\in (A_j\setminus C_j)\cap Q^i_M}\e^d|u^\e_k-v_k|^p\Bigr)\\
&=&
\sum_i\Bigl(\sum_{k\in C_j\cap Q^i_M}\e^d|u^\e_k-u^{\e,i,j}|^p+
\sum_{l\in I_j}\sum_{k\in A^j_l\cap Q^i_M}\e^d|u^\e_k-u^{\e,i,j}_l|^p\Bigr)\\
&\le&
CM^p\e^p\sum_i \Bigl(\sum_{k\in C_j\cap Q^i_M, k-k'\in P_j}\e^d\Bigl|{u^\e_k-u^\e_{k'}\over\e}\Bigr|^p+
\sum_{l\in I_j}\sum_{k\in A^j_l\cap Q^i_M}\e^d\Bigl|{u^\e_k-u^\e_{k'}\over\e}\Bigr|^p\Bigr)
\\
&\le&
CM^p\e^pF_\e(u^\e).
\end{eqnarray*}
We then have
\begin{eqnarray*}
&&\sum_{Q^i_M\subset {1\over\e}\Omega}\e^d\Biggl(\sum_{(k,k')\in N_0(Q^i_M)}\e^p f\Bigl(k,k',{u^\e_k-u^\e_{k'}\over\e}\Bigr)+\sum_{k\in Z(Q^i_M)} g(k, u^\e_k)\Biggr)\\
&=& \sum_{Q^i_M\subset {1\over\e}\Omega}\e^d
\Biggl(\sum_{(k,k')\in N_0(Q^i_M)}f(k,k',{u^\e_k-u^\e_{k'}})+\sum_{k\in Z(Q^i_M)} g(k, u^\e_k)\Biggr)\\
&\ge& \sum_{Q^i_M\subset {1\over\e}\Omega}\e^d \Biggl(
\sum_{Q^i_M\subset {1\over\e}\Omega}\sum_{(k,k')\in N_0(Q^i_M)}f\Bigl(k,k',{u^{\e,i}_k-u^{\e,i}_{k'}}\Bigr)+\sum_{k\in Z(Q^i_M)} g(k, u^{\e,i}_k)\Biggr)+o(1)
\end{eqnarray*}
as $\e\to0$

Since (a translation of ) $u^{\e,i}$ can be used as a test function for $\varphi_M(u^{\e,i,1}, \ldots, u^{\e,i,N})$
we have
\begin{eqnarray*}
&&
\sum_{Q^i_M\subset {1\over\e}\Omega}\Biggl(\sum_{(k,k')\in N_0(Q^i_M)}
f\Bigl(k,k',{u^{\e,i}_k-u^{\e,i}_{k'}}\Bigr)+\sum_{k\in Z(Q^i_M)} g(k, u^{\e,i}_k)\Biggr)
\ge
M^d\varphi_M(u^{\e,i,1}, \ldots, u^{\e,i,N}).
\end{eqnarray*}

We define the piecewise-constant functions $u^{\e,j}_M$ to be equal to $u^{\e,i,1}$ on each $Q^i_M\subset {1\over\e}\Omega$ and to $0$ otherwise. We then obtain
\begin{eqnarray*}
&&\sum_{Q^i_M\subset {1\over\e}\Omega}\e^d\Bigl(\sum_{(k,k')\in N_0(Q^i_M)}\e^p f\Bigl(k,k',{u^\e_k-u^\e_{k'}\over\e}\Bigr)+\sum_{k\in Z(Q^i_M)} g(k, u^\e_k)\Bigr)\\
&&\ge \int_\Omega \varphi_M(u^{\e,1}_M(x), \ldots, u^{\e,N}_M(x))dx +o(1)
\end{eqnarray*}
as $\e\to0$.

Since
$$
u^{\e,i,j}={1\over \#(C_j\cap Q^i_M)} \sum_{l\in C_j\cap Q^i_M}(\widetilde
u^\e_j)_l$$
where $\widetilde u^\e_j$ converges strongly to $u_j$ in $L^1_{\rm loc}(\Omega;\rr^m)$,
so that also $u^{\e,j}_M$ converges strongly to $u_j$ for all $M$. By the Lebesgue Dominated Convergence Theorem we get
\begin{equation}\label{thite}
\lim_{\e\to 0}\int_\Omega \varphi_M(u^{\e,1}_M(x), \ldots, u^{\e,N}_M(x))dx
=
\int_\Omega \varphi_M(u_1(x), \ldots, u_N(x))dx.
\end{equation}

Summing up the liminf inequalities for all $F^j_\e$, (\ref{sete}) and (\ref{thite}), we get
\begin{eqnarray*}
\liminf_{\e\to 0} F_\e(u_\e)&\ge&\sum_{j=1}^N\liminf_{\e\to 0} F^j_\e(u_\e)+m|\Omega|+
\int_\Omega \varphi_M(u_1,\ldots, u_N)\dx\\
&\ge&\sum_{j=1}^N\int_\Omega f^j_{\hom} (\nabla u_j)\dx+m|\Omega|+
\int_\Omega \varphi_M(u_1,\ldots, u_N)\dx,
\end{eqnarray*}
from which (\ref{maineq}) follows taking the limit as $M\to+\infty$ and using Lebesgue's Theorem once again.

\section{Upper bound}
We prove the upper bound for a linear target function
$$
u(x)= (\xi^1 x,\ldots,\xi^N x),
$$
the proof for an affine function following in the same way.
For piecewise-affine functions the same argument applies locally,
while for an arbitrary target function we proceed by approximation (see \cite{BCP}).

A recovery sequence for $u$ can be constructed as follows:

$\bullet$ for all $j=1,\ldots, N$ we choose a recovery sequence $u^j_\e\to \xi^jx$ for $F^j_\e$;
we may regard $u^j_\e$ as defined in the whole $\ZZ^d$.
We set
\begin{equation}\label{jco}
u^\e_k= (u^j_\e)_k\qquad\hbox{ on } C_j;
\end{equation}
\smallskip

$\bullet$ for each fixed $M$ let $Q^i_M$ be the corresponding partition of $\ZZ^d$. For all $i$ we define
$$
u^{\e,i,j}={1\over \#(C_j\cap Q^i_M)} \sum_{l\in C_j\cap Q^i_M}(u^\e_j)_l
$$
for $j=1,\ldots, N$, and take a minimum point $v^{\e,i}$ for  $\wif_M(u^{\e,i,1},\ldots, u^{\e,i,N})$.
We define
$$
u^\e_k=v^{\e, i}_{k-iM}\qquad\hbox{ on } Q^i_M\setminus \bigcup_{j=1}^NA_j ;
$$
Notice that the function $u^{\e,i,j}-u^\e_j$ is of order $\e M$  on $C_j$ ,
and thus, by Lemma \ref{l_pwc}, the difference
\begin{equation}\label{disc}
\sum_{Q^i_M\subset {1\over\e}\Omega}\e^d
\sum_{(k,k')\in N_0(Q^i_M)}f(k,k',{u^\e_k-u^\e_{k'}})-\sum_{Q^i_M\subset {1\over\e}\Omega}\e^d
\sum_{(k,k')\in N_0(Q^i_M)}f(k,k',{\hat u^\e_k-\hat u^\e_{k'}})=o(1)
\end{equation}
as $\e\to0$; here $\hat u^\e_k$ stands for the function equal to  $u^{\e,i,j}$
on $C_j\cap Q^i_M$ and to $u^\e_k$ on ${1\over\e}\Omega\setminus\bigcup_{j=1}^N C_j$.

$\bullet$ for any connected component $A_l^j$ of $A_j\setminus C_j$ with  $A_l^j\subset Q^ i_M$ define
\begin{equation}\label{ueaij}
u_k^\e =v^{\e, i}_{k-iM}+\e z_k^{j,l},
\end{equation}
$z^{j,l}$ being a minimizer of (\ref{mji}).
Note that $v^{\e, i}_{k-iM}$ is a constant function on $A_l^j$, so that $u_k^\e$ is still minimizing.

With this definition of $u^\e$ we have a recovery sequence for $u$. In order to check that, we introduce an outer approximation of the set $\Omega$ as $\Omega_{\e,M}$ defined by
$$
\Omega_{\e,M}=\bigcup_{i\in I^M_\e}\e Q^i_M,\qquad I^M_\e=\Bigl\{i\in \ZZ^d : Q^i_M\cap {1\over\e}\Omega\neq\emptyset\Bigl\}.
$$
In this way we have
\begin{eqnarray*}
F_\e(u^\e)&\le& F_\e\Bigl(u^\e,{1\over\e}\Omega_{\e,M}\Bigr)\\
&\le&\sum_{j=1}^N F^j_\e\Bigl(u^\e,{1\over\e}\Omega_{\e,M}\Bigr)
+\sum_{j,l: A^j_l\cap {1\over\e}\Omega_{\e,M}\neq\emptyset}\sum\limits_{\begin{array}{c}\scriptstyle k,k'\in A_l^j\\[-1mm] \scriptstyle k-k'\in P_j\end{array}}
\e^df\Bigl(k,k',{u^\e_k-u^\e_{k'}\over\e}\Bigr)
\\&&
+\sum_{i\in  I^M_\e}\e^d\Biggl(\sum\limits_{\begin{array}{c}\scriptstyle k,k'\in Q^i_M\\[-1mm] \scriptstyle k-k'\in P_0\end{array}}
f(k,k',{u^\e_k-u^\e_{k'}})+ \sum_{k\in Q^i_M} g(k, u^\e_k)\Biggr)
\\
&&+\sum_{i\in  I^M_\e}\sum\limits_{\begin{array}{c}\scriptstyle k\in Q^i_M, k'\not\in Q^i_M\\[-1mm] \scriptstyle k-k'\in P_0\end{array}}
\e^d f(k,k',{u^\e_k-u^\e_{k'}}),
\end{eqnarray*}
where we have separated the estimates for the contribution of the infinite components of the hard phases, the isolated islands of hard phases, the contributions of the soft-phase energy and the potential $g$ inside each cube $Q^i_M$ and the contributions of the soft-phase interactions at the boundary of each cube.

We separately examine each term. By (\ref{jco}) and the limsup inequality for $F^j_\e$ we have
\begin{equation}\label{hot}
F^j_\e\Bigl(u^\e,{1\over\e}\Omega_{\e,M}\Bigr)=F^j_\e\Bigl(u^j_\e,{1\over\e}\Omega_{\e,M}\Bigr)\le
F^j_\e\Bigl(u^j_\e,{1\over\e}\Omega'\Bigr)\le F^j_{\hom}(\xi^j x, \Omega')+ o(1)
\end{equation}
for all fixed $\Omega'\supset\!\supset\Omega_{\e,M}$.

As for the second term, we have two cases:

\noindent
$\bullet$ $A^j_l\subset Q^i_M$ for some $i\in I^M_\e$.
In this case by (\ref{ueaij}) we have
\begin{equation}\label{tet}
\sum\limits_{\begin{array}{c}\scriptstyle k,k'\in A_l^j\\[-1mm] \scriptstyle k-k'\in P_j\end{array}}
f\Bigl(k,k',{u^\e_k-u^\e_{k'}\over\e}\Bigr)=
\sum\limits_{\begin{array}{c}\scriptstyle k,k'\in A_l^j\\[-1mm] \scriptstyle k-k'\in P_j\end{array}}
f(k,k',{z^{j,l}_k-z^{j,l}_{k'}})=m^j_l,
\end{equation}
so that
\begin{equation}
\sum_{i\in I^M_\e}\sum_{j,l: A^j_l\subset Q^i_M}\sum\limits_{\begin{array}{c}\scriptstyle k,k'\in A_l^j\\[-1mm] \scriptstyle k-k'\in P_j\end{array}}
\e^df\Bigl(k,k',{u^\e_k-u^\e_{k'}\over\e}\Bigr)\le \sum_{j,l: A^j_l\cap {1\over\e}\Omega_{\e,M}\neq\emptyset}\e^dm^j_l
\le m|\Omega|+o(1);
\end{equation}
\noindent$\bullet$ for the other $A^j_l$ we have $u^\e_k-u^\e_{k'}=0$ for all $k,k'$, so that their total contribution is $O(1/M)$.

By  (\ref{disc}) the third term is estimated by
\begin{eqnarray}\nonumber&&
\sum_{i\in  I^M_\e}\e^d\Biggl(\sum\limits_{\begin{array}{c}\scriptstyle k,k'\in Q^i_M\\[-1mm] \scriptstyle k-k'\in P_0\end{array}}
f(k,k',{u^\e_k-u^\e_{k'}})+ \sum_{k\in Q^i_M} g(k, u^\e_k)\Biggr)\\
&\le&\nonumber
\sum_{i\in  I^M_\e}\e^d\Biggl(\sum\limits_{\begin{array}{c}\scriptstyle k,k'\in Q^i_M\\[-1mm] \scriptstyle k-k'\in P_0\end{array}}
f(k,k',{\widehat u^\e_k-\widehat u^\e_{k'}})+ \sum_{k\in Q^i_M} g(k, \widehat u^\e_k)\Biggr) +o(1)
\\
&=&\nonumber
\sum_{i\in  I^M_\e}\e^dM^d\wif_M(u^{\e,i,1},\ldots, u^{\e,i,N}) +o(1)
\\
&\le&\label{pol}
\int_{\Omega'}\wif_M(u^{\e,M}_1,\ldots, u^{\e,M}_N)\dx+ o(1),
\end{eqnarray}
where $u^{\e,M}_j$ is the above-defined piecewise-constant function with value $u^{\e,i,j}$ on $Q^i_M$.
Note that
\begin{equation}\label{coed}
u^{\e,M}_j\to \xi^j x\hbox{ in }L^p(\Omega';\rr^m)
\end{equation}
as $\e\to 0$ for all $j$ and $M$.

As for the last term, we note that the difference $u^\e_k-u^\e_{k'}$ is either equal to $0$ (if both $k$ and $k'$ do not belong to in any $C_j$ $j=1,\ldots, N$), to $(u^\e_j)_k$ if $k\in C_j$ and $k'\not\in \bigcup_j C_j$, or to $(u^\e_j)_k-(u^\e_{j'})_{k'}$ if $k\in C_j$ and $k'\in C_{j'}$ with $j\neq j'$. In any case, we can estimate the total contribution by\begin{equation}\label{cof}
C\sum_{i\in I^\e_M} \sum_{j=1}^N \sum_{k\in C_j\cap (iM+(Q_M\setminus Q_{M-R}))}\e^d(1+|(u^\e_j)_k|^p)=
C\sum_{i\in I^\e_M} \sum_{j=1}^N \sum_{k\in C_j\cap (iM+(Q_M\setminus Q_{M-R}))}\e^d(1+|(\widetilde u^\e_j)_k|^p).
\end{equation}
Note that since $\widetilde u^\e_j$ are equi-integrable the latter term vanishes as $M\to+\infty$ uniformly in $\e$. In fact, it can be written as an integral over a set of measure of order $1/M$.

Taking into account this last estimate, together with (\ref{hot}), (\ref{tet}) and (\ref{pol}),
we get
\begin{equation}
\limsup_{\e\to0}F_\e(u_\e)\le
 \sum_{j=1}^N F^j_{\hom}(\xi^j x,\Omega')+ m|\Omega'|+\int_{\Omega'} \wif_M(u)\dx+o(1)
\end{equation}
as $M\to+\infty$.
We can then let $M\to+\infty$ and use Lebesgue's Theorem to obtain
\begin{equation}
\limsup_{\e\to0}F_\e(u_\e)\le
 \sum_{j=1}^N F^j_{\hom}(\xi^j x,\Omega')+ m|\Omega'|+\int_{\Omega'} \varphi(u)\dx.
\end{equation}
Eventually we obtain the desired inequality by the arbitrariness of $\Omega'\supset\!\supset\Omega$.

\section{The dynamical case}
We consider the asymptotic behavior of solutions for the gradient flow with respect to the $L^2$-metric of the functionals
\begin{eqnarray}\nonumber\label{fep0}
F_\e(u)=F_\e\Bigl(u,{1\over\e}\Omega\Bigr)&=&\sum_{j=1}^N\sum_{(k,k')\in N^\e_j(\Omega)} \e^d f\Bigl(k,k', {u_k-u_{k'}\over\e}\Bigr)\\
&&+
\sum_{(k,k')\in N^\e_0(\Omega)} \e^{d+p} f\Bigl(k,k', {u_k-u_{k'}\over\e}\Bigr);
\end{eqnarray}
i.e., functionals (\ref{fep}) with $g=0$, with given initial data functions $u^{\e}_0:\ZZ^d\cap{1\over\e}\Omega\to\rr^m$ converging to some $u_0:\Omega\to\rr^m$ (note that in this notation $0\in\NN$ has the meaning of an initial time, and should not be confused with an index $0\in\ZZ^d$ as in the notation labelling the values of discrete functions). To that end, we will apply the minimizing-movement scheme along a sequence of functionals (see \cite{2013LN,AGS}): with fixed $\tau>0$ we define recursively,
for $l\in\NN$, $l\ge 1$, $u^{\e,l}$ as the minimizers of
\begin{equation}
v\mapsto F_\e(v)+{1\over 2\tau}\|v-u^{\e,l-1}\|^2,
\end{equation}
where $u^{\e,0}=u^\e_0$.
We want to characterize the limits $u^l$ of these minimizers as $\e\to 0$ as the minimizers obtained by recursively applying the same scheme to a $\Gamma$-limit $F_0$; i.e, to show that $u^l$ is a minimizer of
\begin{equation}
v\mapsto F_0(v)+{1\over 2\tau}\|v-u^{l-1}\|^2.
\end{equation}
The norm in these formulas is the $L^2$-norm in $\Omega$.

\medskip
Note that this characterization does not follow trivially from the fundamental theorem of $\Gamma$-convergence since the additional term may not be a continuous perturbation, depending on the topology chosen (e.g, the one used in Theorem \ref{dph}). In order to have a topology for which the last term gives a continuous perturbation, and the sequences $u^{\e,l}$ are still pre-compact, we will use two-scale convergence, also describing the limit behavior of function of the soft phase.

\medskip
Everywhere in this section we assume that the sites that do not interact at all with infinite components of the hard phases do not contribute to the energy functional.
In other words,
\begin{equation}\label{conne}
\hbox{for any } k\in\bigcup\limits_{j=0}^N A_j \ \  \hbox{there exists  } k'\in \bigcup\limits_{j=1}^N C_j\ \  \hbox{such that }\
k \hbox{ and } k' \ \  \hbox{are connected;}
\end{equation}
i.e., either $k=k'$ or there exists a path $\{k_n\}_{n=0,\ldots,K}$
such that $k_0=k$, $k_K=k'$ and $(k_n,k_{n-1})\in \bigcup_{j=0}^N N_j$.

\subsection{$\Gamma$-limits with respect to discrete two-scale convergence}

Let $v^\e:Z^\e(\Omega)\to \rr$ be a sequence bounded in $L^2(\Omega)$.
We say that $v^\e$ {\em weakly} (respectively, {\em strongly}) {\em (discrete) two-scale converges} to the family $\{v^y\}$ for $y\in Y:=\{1,\ldots, T\}^d$ with $v^y\in L^2(\Omega)$ if for all $y\in Y$ the sequence $v^{\e,y}$ of discrete functions obtained by considering only the values $v^\e_k$ with $k=y$ modulo $Y$ weakly (respectively, strongly) converges to the corresponding $v^y$; more precisely, we
define $v^{\e,y}$ on $T\ZZ^d$ as
$$
v^{\e,y}_j= v^\e_{y+j}
$$
for $j\in T\ZZ^d$, and require that its piecewise-constant interpolation weakly converges in $L^2(\Omega)$ to $v^y$.

It can be checked that the definition corresponds to that of  two-scale convergence as in \cite{Ng,Al}; i.e. (for weak convergence) that
for all  families $\{\varphi_k\}_{k\in \ZZ^d}$ of smooth functions $T$-periodic in $k$ we have
\begin{equation}
\lim_{\e\to 0} \sum_{k\in Z^\e(\Omega)}\e^d v^\e_k \varphi_k(\e k)= {1\over T^d}\sum_{k\in Y}\int_\Omega v_k(x)\varphi_k(x)\,dx.\end{equation}
Note that this is equivalent to
\begin{equation}
\lim_{\e\to 0} \int_\Omega v^\e(x) \varphi_k(x)\dx= {1\over T^d}\sum_{y\in Y}\int_\Omega v_y(x)\varphi_y(x)\,dx
\end{equation}
upon identification of $v^\e$ with its piecewise-constant interpolation.

\bigskip

We can compute the $\Gamma$-limit of
$$
G_\e(u)=F_\e(u)+\sum_{k\in Z^\e(\Omega)} \e^d g(u_k-w^\e_k)
$$
with respect to the weak two-scale convergence $u^\e\to \{u^y\}$, where $g:\rr^m\to \rr$ is a continuous function and $w^\e$ strongly  two-scale converges to $\{w^y\}$.


\begin{theorem}
The $\Gamma$-limit of $G_\e$ with respect to weak discrete two-scale convergence is
\begin{eqnarray}\nonumber
G_0(\{u^y\})&=&\sum_{j=1}^N{1\over \#(C_j\cap Y)}\sum_{y\in C_j\cap Y}\int_\Omega f^j_{\hom}(\nabla u^y)\dx
\\
&&+
{1\over T^d}\sum_{y\in Y\cap\bigcup_{j=1}^N C_j} \int_\Omega g(u^y(x)-w^y(x))\,dx
+\int_\Omega \varphi_g (x,\{u^y(x)\})\dx
\end{eqnarray}
with the constraint that $u^y$ is independent of $y$ on each $C_j$, and $\varphi_g $ is given by
\begin{eqnarray}\nonumber
\varphi_g (x,\{u^y\})&=&\lim_{M\to+\infty} {1\over T^dM^d}
\inf\Bigl\{ \sum_{(k,k')\in N_0(Q_{{T}M})} f(k,k', {v_k-v_{k'}})\\
&&\qquad\qquad+\sum_{k\in Z_0(Q_{{T}M})} g(v_k-w^k(x)):
\sum_{k\in Q_{{T}M}\cap (y+T\ZZ^d)} v_k=M^d u^y
\Bigr\},
\end{eqnarray}
where each test function $v$ is extended by ${{T}M}$-periodicity.

\end{theorem}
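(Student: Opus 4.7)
The strategy is to adapt the proof of Theorem~\ref{dph} to weak discrete two-scale convergence, keeping the same four-fold decomposition of the energy and handling the lower-order term $g(u_k-w^\e_k)$ separately on residues in $\bigcup_j C_j$ (where the relevant two-scale convergence is automatically strong) and on the remaining residues (where it is absorbed into $\varphi_g$). The constraint that $u^y$ is independent of $y$ on each $C_j$ follows directly from Lemma~\ref{exc}: if $\sup_\e G_\e(u^\e)<+\infty$, then the extensions $\widetilde u^\e_j$ converge, up to subsequences, strongly in $L^p_{\rm loc}(\Omega)$ to some $u_j\in W^{1,p}(\Omega;\rr^m)$, so that $u^y\equiv u_j$ for every $y\in C_j\cap Y$ and the two-scale convergence on those residues is in fact strong. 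This also explains the prefactor $1/\#(C_j\cap Y)$ in the first term of $G_0$.

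For the \textbf{lower bound} I would split $G_\e(u^\e)$ into four groups: (a) the hard-phase sums $\sum_j F^j_\e$; (b) the finite-island contributions; (c) the $g$-sum restricted to sites in $\bigcup_j C_j$; (d) the $N_0$-interactions plus the remaining $g$-sum. Groups (a) and (b) give $\sum_j\int f^j_{\hom}(\nabla u_j)\dx$ and $m|\Omega|+o(1)$ by Theorem~\ref{hdpd} and the estimate (\ref{sete}). Group (c) passes to the limit by (\ref{c5}) together with the $L^p$-equi-integrability inherited from (\ref{c1}), producing the middle term of $G_0$. Group (d) is handled by the $M$-cube averaging scheme of Section~6: on each cube $Q^i_M$ replace $u^\e$ by its per-residue averages $\{u^{\e,i,y}\}_{y\in Y}$, control the replacement cost by a Poincar\'e-type estimate of order $CM^p\e^p G_\e(u^\e)$, and use the averaged function as a test function for $\varphi_{g,M}(\e iM,\{u^{\e,i,y}\})$; letting $\e\to 0$ first and then $M\to+\infty$ yields $\int_\Omega\varphi_g(x,\{u^y(x)\})\dx$. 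The existence of the limit defining $\varphi_g$ is proved as in Proposition~\ref{vf}, the additional $g$-term being controlled by (\ref{c4})-(\ref{c5}).

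For the \textbf{upper bound} I would mimic the construction of Section~7: on each $C_j$ set $u^\e=u^j_\e$, where $u^j_\e$ is a recovery sequence of $F^j_\e$ at $u_j$; on each $Q^i_M\setminus\bigcup_j C_j$ copy a boundary-adjusted almost-minimizer of $\varphi_{g,M}(\e iM,\{u^{\e,i,y}\})$, with $u^{\e,i,y}$ the average of $u^j_\e$ on $Q^i_M\cap C_j\cap(y+T\ZZ^d)$ when $y\in C_j\cap Y$; on each isolated hard-phase island add a translation by a minimizer of (\ref{mji}) as in (\ref{ueaij}). The cross-cube $N_0$-interactions are controlled by the equi-integrability estimate that led to (\ref{cof}), and the final Lebesgue-theorem argument of Section~7 applies verbatim.

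\textbf{Main obstacle.} The subtle point is that $\varphi_g$ is defined by \emph{per-residue mean} constraints on test functions rather than by Dirichlet-type boundary conditions as in $\wif_M$ of Proposition~\ref{vf}. The recovery construction must therefore simultaneously (i) localize each almost-minimizer of $\varphi_{g,M}$ near $\partial Q_{TM}$ at vanishing cost (an analog of Remark~\ref{frame}), so that almost-minimizers in neighbouring cubes may be glued along the soft phase, and (ii) match the prescribed per-residue averages with those produced by the hard-phase recovery sequences on $Q^i_M\cap C_j\cap(y+T\ZZ^d)$. Step~(ii) is obtained by an affine shift on each residue class after the localization of step~(i); its cost vanishes as $\e\to 0$ and $M\to+\infty$ by the Lipschitz-type estimate of Proposition~\ref{vf} combined with the $L^p$-convergence of the averages $u^{\e,i,y}$ to $u^y(\e iM)$.
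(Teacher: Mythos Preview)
Your approach is essentially the paper's: its entire proof is the remark that one follows Theorem~\ref{dph}, recasting the interaction density as $\varphi_g$ in the two-scale variables, with the required changes taken from \cite{BCP}, Section~7.2. Your expansion is faithful to that scheme; one minor simplification is that in lower-bound step~(d) no replacement or Poincar\'e-type cost is needed, since the restriction of $u^\e$ to each $Q^i_{TM}$ is already an admissible test function for $\varphi_{g,M}$ with $\{u^{\e,i,y}\}$ equal to its own per-residue averages (and this is precisely why the mean-constraint formulation of $\varphi_g$ is used in place of the constant-on-components constraint of $\varphi$).
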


\begin{proof}
The proof follows that of Theorem \ref{dph}, with a different characterization of the interaction energy density $\varphi_g$ in terms of the variables $\{u^y\}$. The changes follow the ones for the corresponding theorem in the continuum \cite{BCP} Section 7.2.
\end{proof}

\begin{proposition}\label{perco}
If $f$ and $g$ are convex then
$$
\varphi_g (x,\{u^y\})={1\over T^d}\Bigl(\sum_{(k,k')\in {N^\#_0(Q_{T})}} f(k,k', {u^k-u^{k'}})+\sum_{k\in Z(Q_{T})} g(u^k-w^k(x))\Bigr),
$$
where
$$
N^\#_0(Q_{T})=\{(k,k')\in N_0: \ k\in Q_T\}.
$$
\end{proposition}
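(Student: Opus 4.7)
My plan is to establish matching upper and lower bounds on $\varphi_g(x,\{u^y\})$ in the limit $M\to+\infty$; in both directions the relevant object is the $T$-periodic candidate $\hat v_k=u^y$ when $k\equiv y\pmod T$.

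For the upper bound, use $\hat v$ itself as a test function. It is trivially $TM$-periodic, and the average constraint is satisfied since each coset $y+T\ZZ^d$ meets $Q_{TM}$ in exactly $M^d$ sites of value $u^y$. Grouping pairs in $N_0(Q_{TM})$ by their $T$-equivalence class, each pair type $(k_0,k_0')\in N^\#_0(Q_T)$ contributes $f(k_0,k_0',u^{k_0}-u^{k_0'})$ exactly $M^d-O(M^{d-1})$ times (the admissible $T$-translates $(k_0+Tj,k_0'+Tj)$ with both endpoints in $Q_{TM}$), while the $g$-sum decomposes into $M^d$ copies of $\sum_{k\in Z(Q_T)}g(u^k-w^k(x))$. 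Dividing by $T^dM^d$ and letting $M\to\infty$ yields the claimed formula as an upper bound.

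For the lower bound, take any admissible $v$. Convexity of $g$ and the constraint give an exact Jensen estimate on each coset,
\begin{equation*}
\sum_{k\in Q_{TM}\cap(y+T\ZZ^d)}g(v_k-w^y(x))\ge M^d\,g(u^y-w^y(x)),
\end{equation*}
reproducing the $g$-part after summing over $y\in Z(Q_T)$. For the $f$-part, partition $N_0(Q_{TM})$ into equivalence classes $\{(k_0+Tj,k_0'+Tj):j\in J_{k_0,k_0'}\}$ indexed by $(k_0,k_0')\in N^\#_0(Q_T)$, with $|J_{k_0,k_0'}|=M^d-O(M^{d-1})$; Jensen for the convex $f(k_0,k_0',\cdot)$ applied to each class gives
\begin{equation*}
\sum_{j\in J_{k_0,k_0'}}f(k_0,k_0',v_{k_0+Tj}-v_{k_0'+Tj})\ge |J_{k_0,k_0'}|\,f\bigl(k_0,k_0',\bar\alpha_{k_0,k_0'}\bigr),
\end{equation*}
where $\bar\alpha_{k_0,k_0'}$ is the mean of the differences over $J_{k_0,k_0'}$. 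Since the constraint pins the full-coset average $M^{-d}\sum_{j\in J_{k_0}}v_{k_0+Tj}$ to $u^{k_0}$ exactly, the deviation $\bar\alpha_{k_0,k_0'}-(u^{k_0}-u^{k_0'})$ reduces to boundary sums over $J_{k_0}\setminus J_{k_0,k_0'}$ of cardinality $O(M^{d-1})$.

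The main technical obstacle is showing these boundary corrections are negligible in the limit. Restricting to infimizing $v$ with $E(v)\le CM^d$, the $p$-coercivity of $f$ on $N_0$-pairs yields an $\ell^p$ gradient bound on $v-\hat v$, and a discrete Poincar\'e inequality (together with H\"older applied to boundary traces) controls the relevant boundary $\ell^1$ norms; the Lipschitz condition (\ref{c2}) then transfers this to an $o(M^d)$ bound on the discrepancy between $|J_{k_0,k_0'}|\,f(k_0,k_0',\bar\alpha_{k_0,k_0'})$ and $M^d f(k_0,k_0',u^{k_0}-u^{k_0'})$. Summing over pair types, dividing by $T^dM^d$ and sending $M\to+\infty$ matches the upper bound and delivers the claimed identity.
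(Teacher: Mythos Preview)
Your upper-bound and the overall Jensen strategy for the lower bound match the paper's approach, but you miss the key simplification that makes the lower bound immediate and are thereby led into a boundary-correction argument that is both unnecessary and not clearly justified.

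The point you overlook is that test functions $v$ in the definition of $\varphi_g$ are extended $TM$-periodically. Because of this, the correct way to apply Jensen is to replace $v$ by its $T$-shift average $\bar v_k:=M^{-d}\sum_{j\in\{0,\ldots,M-1\}^d}v_{k+Tj}$, which is $T$-periodic and, by the average constraint, satisfies $\bar v_k=u^y$ whenever $k\equiv y\pmod T$. For the $g$-sum this is exactly your Jensen step. For the $f$-sum, group pairs by their type $(y,y')\in N^\#_0(Q_T)$; with $TM$-periodicity each type contributes \emph{exactly} $M^d$ translates (not $M^d-O(M^{d-1})$), and the average of $v_{y+Tj}-v_{y'+Tj}$ over $j$ equals $\bar v_y-\bar v_{y'}=u^y-u^{y'}$ \emph{exactly}. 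Jensen then gives the lower bound with no error term, so the infimum equals the claimed expression already for $M=1$; this is what the paper means by ``by Jensen's inequality we may take $M=1$'' (the classical argument for periodic convex minimization, cf.\ \cite{BDF} Section~14.3).

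By contrast, your non-periodic pair count produces defects $J_{k_0}\setminus J_{k_0,k_0'}$, and your proposed control of them via ``discrete Poincar\'e together with H\"older on boundary traces'' is not substantiated: the proposition assumes only convexity of $g$ (and the upper bound~(\ref{c4})), not coercivity, so you have no a priori $\ell^p$ bound on the values of a near-infimizing $v$ at boundary sites; and an $\ell^p$ gradient bound of order $M^{d/p}$ combined with a Poincar\'e constant of order $M$ does not by itself give an $o(M^d)$ boundary $\ell^1$ trace. The periodic averaging avoids all of this.
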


\begin{proof}
The proof follows by a classical argument for periodic convex minimization problems (see \cite{BDF} Section 14.3), noting that by Jensen's inequality we may take $M=1$ and a test function $v$ replaced by its mean value on each $y$.  By the average constraint in the definition of $\varphi_g$ this argument fixes exactly the value equal to $u^y$ on each $y$. The definition of $N^\#_0(Q_{T})$ is given so as to avoid double counting in the computation of the interactions.
\end{proof}

\begin{example}\rm In order to illustrate the difference with Theorem \ref{dph} we consider Example \ref{exh}(2).
In that case $C_0\cap Y$ is the only point $1$, so that weak discrete two-scale convergence reduces to the separate weak convergence of even and odd interpolations, and then, by the coerciveness on even interpolations, to the
strong convergence of even interpolations and the weak convergence of odd interpolations. The $\Gamma$-limit is then expressed by
$$
G_0(u^1,u^2)=2\int_{(0,1)}|(u^2)'|^2\dx+ \int_{(0,1)}|u^2-u^1|^2\dx+ {1\over 2} \int_{(0,1)} |u^1-u_0|^2\dx+
{1\over 2} \int_{(0,1)} |u^2-u_0|^2\dx,
$$
where $u^1$ is the limit of odd interpolations and $u^2$ the limit of even interpolations.
Note that the computation of the minimum
$$
\min\Bigl\{{1\over2}|u^1-u_0|^2+ |u^2-u^1|^2: u^1\in \rr\Bigr\}
$$
gives the integrand in the limit of Example \ref{exh}(2).
\end{example}



\begin{lemma}\label{stroco}
Let $g^\e_k(u)= C|u-w^\e_k|^2$ with $w^\e$ strongly two-scale converging to $w^y$ and $\sup_\e F_\e(w^\e)<+\infty$.
Then the recovery sequences for $G_0$ converge strongly.
\end{lemma}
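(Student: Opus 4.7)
The plan is to exploit the quadratic form of $g$ via a parallelogram-type expansion, coupled to a matched splitting of $G_\e$ and of its $\Gamma$-limit. Let $u^\e$ be a recovery sequence, so $u^\e\rightharpoonup\{u^y\}$ weakly two-scale and $G_\e(u^\e)\to G_0(\{u^y\})$. Write
\[
G_\e(v)=F_\e(v)+\Phi_\e(v),\qquad \Phi_\e(v):=C\sum_{k\in Z^\e(\Omega)}\e^d|v_k-w^\e_k|^2,
\]
isolating the quadratic potential from the hard-phase and soft-phase gradient energies.

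The first key step is to match this splitting on the limit side, i.e., to identify $G_0=F_0+\Phi_0$, where $F_0$ is the $\Gamma$-limit of $F_\e$ (the previous theorem applied with $g\equiv 0$) and
\[
\Phi_0(\{u^y\}):=\frac{C}{T^d}\sum_{y\in Y}\int_\Omega|u^y-w^y|^2\dx
\]
is the natural weak limit of $\Phi_\e$. The quadratic potential in $G_0$ is distributed between the explicit $g$-term on hard sites and the contribution inside $\varphi_g$; under the convexity assumptions operative here, Proposition \ref{perco} (or rather its Jensen-type reduction) shows that the $g$-part of $\varphi_g$ collapses onto the pointwise quadratic in the $u^y$, so that these contributions add up exactly to $\Phi_0$.

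Next, both pieces admit separate liminf inequalities: $\liminf_\e F_\e(u^\e)\ge F_0(\{u^y\})$ by the $\Gamma$-liminf for $F_\e$, and $\liminf_\e \Phi_\e(u^\e)\ge \Phi_0(\{u^y\})$ by the weak-$L^2$ lower semicontinuity of $v\mapsto\int|v-w^y|^2$ together with the strong two-scale convergence of $w^\e$. Since $\lim_\e G_\e(u^\e)=G_0=F_0+\Phi_0$ equals the sum of these bounds, both liminfs are actually limits; in particular $\Phi_\e(u^\e)\to\Phi_0(\{u^y\})$.

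Finally, expanding
\[
\Phi_\e(u^\e)=C\sum_k\e^d|u^\e_k|^2-2C\sum_k\e^d u^\e_k\cdot w^\e_k+C\sum_k\e^d|w^\e_k|^2,
\]
the cross term converges to $\frac{2C}{T^d}\sum_y\int u^y\cdot w^y\dx$ as a product of a weakly and a strongly two-scale convergent sequence, and the last term converges to $\frac{C}{T^d}\sum_y\int|w^y|^2\dx$ by strong two-scale of $w^\e$. Hence $\sum_k\e^d|u^\e_k|^2\to\frac{1}{T^d}\sum_y\int|u^y|^2\dx$. The weak two-scale convergence of $u^\e$ gives, per component, $\liminf_\e\int|u^{\e,y}|^2\dx\ge\int|u^y|^2\dx$; since these per-$y$ lower bounds sum to the total just computed, equality is forced for every $y$. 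Norm convergence combined with weak $L^2$-convergence then yields strong $L^2$-convergence of every $u^{\e,y}$, i.e., strong two-scale convergence. The main obstacle is the matched decomposition $G_0=F_0+\Phi_0$: isolating the $g$-contribution inside $\varphi_g$ as a pointwise quadratic integrand is what makes the parallelogram argument go through, and it relies on the convex-separable structure of quadratic $g$ as exposed by Proposition \ref{perco}.
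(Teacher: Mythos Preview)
Your argument is correct and takes a genuinely different route from the paper. The paper localises on a family of cubes $Q_\delta$, replaces $u^\e$ by its $y$-averages on each cube, applies Jensen's inequality to the soft-phase $f$-terms, and then tracks the quadratic identity $\sum_k|u^\e_k-w^{\e,y}|^2-\sum_k|u^{\e,y}-w^{\e,y}|^2=\sum_k(|u^\e_k|^2-|u^{\e,y}|^2)$ cube by cube; this produces a chain of inequalities that, after comparison with $\varphi_g$ via Proposition~\ref{perco} and some Poincar\'e/boundary corrections, forces $\liminf_\e\int(|u^{\e,y}|^2-|u^y|^2)\le 0$. You instead work globally: the identity $G_0=F_0+\Phi_0$ (with $F_0$ the two-scale $\Gamma$-limit of $F_\e$ at $g=0$) follows immediately from Proposition~\ref{perco} by subtracting the two cell formulas, and then the already-proved $\Gamma$-liminf for $F_\e$ together with the elementary weak lower semicontinuity of $\Phi_\e$ pin both pieces, so that $\Phi_\e(u^\e)\to\Phi_0(\{u^y\})$; the quadratic expansion then gives norm convergence for each $y$. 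Your approach is shorter and more conceptual, and it sidesteps the localisation, the cube-boundary corrections, and the explicit Jensen step, at the price of invoking the full two-scale $\Gamma$-convergence theorem a second time (with $g=0$). Both arguments rest essentially on the same two ingredients --- Proposition~\ref{perco} for the convex cell problem and the parallelogram structure of quadratic $g$ --- but you package them differently.
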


\Proof
Take $u^\e$ a recovery sequence for $\{u^y\}$. Note first that since $w^\e$ converges strongly then $|u^\e|^2\dx$ cannot concentrate on the boundary of $\Omega$, otherwise also the $\Gamma$-limit would have a term taking $\partial\Omega$ into account. We then have to show strong convergence in the interior of $\Omega$.

Let $\{Q_\delta\}$ be a family of disjoint cubes of size $\delta$ contained in $\Omega$. We can then write
\begin{eqnarray}\label{stuno}
\nonumber
G_0(\{u^y\})&=& \lim_{\e\to0} \Bigl(F_\e(u^\e)+C\sum_{k\in Z^\e(\Omega)}\e^d |u^\e_k-w^\e_k|^2\Bigr)
\\ \nonumber
&\ge& \sum_{\{Q_\delta\}}  \liminf_{\e\to0}\Bigl(F_\e(u^\e, Q_\delta)+C\sum_{k\in Z^\e(Q_\delta)}\e^d |u^\e_k-w^\e_k|^2\Bigr)
\\ \nonumber
&\ge& \sum_{\{Q_\delta\}}\Bigl(\sum_{j=1}^N{1\over \#(C_j\cap Y)}\sum_{y\in C_j\cap Y}\int_{Q_\delta} f^j_{\hom}(\nabla u^y)\dx
\\&& \nonumber
+
{C\over T^d}\sum_{y\in \bigcup_{j=1}^N C_j} \int_\Omega |u^y(x)-w^y(x)|^2\,dx\Bigr)
\\&&
+ \sum_{\{Q_\delta\}}\liminf_{\e\to0}\e^d
\Bigl( \sum_{(k,k')\in N_0(Q_{\delta/\e})} f(k,k', {u^\e_k-u^\e_{k'}})+\sum_{k\in Z(Q_{\delta/\e})} C|u^\e_k-w_k^\e|^2\Bigr).
\end{eqnarray}
In order to estimate the last term, for all $y\in Y$ and  $k$ with $k-y\in T\ZZ^d$ we substitute $u^\e_k$
with the average $u^{\e,y}$ over all $k'\in Q_{\delta/\e}$ with $k'-y\in T\ZZ^d$. Note that we may suppose that
$\delta/\e\in T\ZZ$, up to a vanishing error in the computation of these averages as $\e\to 0$, so that
$$
u^{\e,y}={T^d\e^d\over \delta^d} \sum_{k'\in Q_{\delta/\e}\cap(y+T\ZZ^d)}u^\e_{k'}\,.
$$
In the following for all $k$ we indicate by $y=y_k$ the (unique) point in $Y\cap (k+T\ZZ^d)$.

With fixed $\eta$,  by using the Young inequality and the convexity inequality on the first term, we then obtain

\begin{eqnarray}
&&\e^d \sum_{\{Q_\delta\}}\Bigl(\sum_{(k,k')\in N_0(Q_{\delta/\e})} f(k,k', {u^\e_k-u^\e_{k'}})+\sum_{k\in Z(Q_{\delta/\e})} C|u^\e_k-w^\e_k|^2\Bigr)
\nonumber
\\
&\ge& \e^d \sum_{\{Q_\delta\}}\Bigl(\sum_{(k,k')\in N_0(Q_{\delta/\e})} f(k,k', {u^\e_k-u^\e_{k'}})+\sum_{k\in Z(Q_{\delta/\e})} C(1-\eta)|u^\e_k-w^{\e,y}|^2
\nonumber \\
&& - C\Bigl({1\over\eta}-1\Bigr)\sum_{k\in Z(Q_{\delta/\e})} |w^\e_k-w^{\e,y}|^2\Bigr)
\nonumber
\\
&\ge& \e^d(1-\eta) \sum_{\{Q_\delta\}}\Bigl(\sum_{(k,k')\in N_0(Q_{\delta/\e})} f(k,k', {u^\e_k-u^\e_{k'}})+\sum_{k\in Z(Q_{\delta/\e})} C|u^\e_k-w^{\e,y}|^2
\nonumber \\
&&
-\sum_{k\in Z(Q_{\delta/\e})} C|u^{\e,y}-w^{\e,y}|^2+\sum_{k\in Z(Q_{\delta/\e})} C|u^{\e,y}-w^{\e,y}|^2- {1\over\eta}\sum_{k\in Z(Q_{\delta/\e})} C|w^\e_k-w^{\e,y}|^2\Bigr)
\nonumber
\\
&=& \e^d(1-\eta) \sum_{\{Q_\delta\}}\Bigl(\sum_{(k,k')\in N_0(Q_{\delta/\e})} f(k,k', {u^\e_k-u^\e_{k'}})
+\sum_{k\in Z(Q_{\delta/\e})} C\bigl(|u^\e_k|^2-|u^{\e,y}|^2\bigr)
\label{oups}
\\
&&+{\delta^d\over\e^dT^d}\sum_{y\in Y} C|u^{\e,y}-w^{\e,y}|^2- {1\over\eta}\sum_{k\in Z(Q_{\delta/\e})} C|w^\e_k-w^{\e,y}|^2\Bigr)
\nonumber \\
\nonumber
&\ge & (1-\eta){\delta^d\over T^d}\sum_{\{Q_\delta\}}\Bigl(
\sum_{(y,y')\in N^\#_0(Y)} f(y,y', {u^{\e,y}-u^{\e,y'}})+\sum_{y\in Y} C|u^{\e,y}-w^{\e,y}|^2\Bigr)
-C'\frac{\e}{\delta}\delta^{d} \\
\nonumber
&&+\e^d(1-\eta)\sum_{\{Q_\delta\}}\sum_{k\in Z(Q_{\delta/\e})} C(|u^\e_k|^2-|u^{\e,y}|^2)- {(1-\eta)\over\eta}\e^d\sum_{\{Q_\delta\}}\sum_{k\in Z(Q_{\delta/\e})} C|w^\e_k-w^{\e,y}|^2.
\end{eqnarray}

Note that by taking into account only interactions with
$(k,k')\in N_0(Q_{\delta/\e})$ we have neglected some interactions ``through the boundary'' of $Q_{\delta/\e}$, which introduce an error on the boundary of the hard components. After a proper adjustment of the position of $Q_\delta$ this can be estimated by the convexity and the Poincar\'e inequality, which gives the term $-C'\e\delta^{d-1}$.
Indeed, by (\ref{c2}) and the Poincar\'e inequalities on the first hard phase $C_1$ we have
$$
\e^d \sum_{\{Q_\delta\}}\sum\limits_{k\in C_1\cap Z(Q_{\delta/\e}) }|u^\e_k-C^\e|^p\leq C
$$
for some constant $C^\e$, we can take $C^\e$  equal to the average of  $u^\e_k$ over
$\bigcup_{\{Q_\delta\}}( C_1\cap Z(Q_{\delta/\e}) )$.  Denote by $\hat\mathcal{C}_1$ the set of $k\in\mathbb Z^d$
that are connected to $C_1$.  Combining the last estimate with the energy bound and considering (\ref{c2}) we get
$$
\e^d \sum_{\{Q_\delta\}}\sum\limits_{k\in  \hat\mathcal{C}_1\cap  Z(Q_{\delta/\e}) }|u^\e_k-C^\e|^p\leq C
$$

Next, we choose $\mathcal{R}$ such that  any two points do not interact if the distance between them is greater than or equal to $\mathcal{R}$.
For each $\e>0$ one can adjust the position of the cubes $Q_{\delta/\e}$ in such a way that
$$
\e^d  \sum_{\{Q_\delta\}} \sum\limits_{k\in \hat\mathcal{C}_1\cap  Z(Q^{\mathcal{R}}_{\delta/\e})}|u^\e_k-C^\e|^p\leq C\big._\mathcal{R}\frac{\e}{\delta},
$$
where $$Z(Q^{\mathcal{R}}_{\delta/\e})=\{k\in Z(Q_{\delta/\e})\,:\, \mathrm{dist}(k,\partial Q_{\delta/\e})\leq \mathcal{R}\}.$$
Setting $\widehat{\mathcal{N}}_0(Q_{\delta/\e})=N_0(Q_{\delta/\e})\cap(\mathcal{C}_1\times\mathcal{C}_1)$, with the help of Jensen's inequality we obtain
$$
\e^d \sum_{\{Q_\delta\}}\sum_{(k,k')\in \widehat{\mathcal{N}}_0(Q_{\delta/\e})} f(k,k', {u^\e_k-u^\e_{k'}})=
\e^d \sum_{\{Q_\delta\}}\sum_{(k,k')\in \widehat{\mathcal{N}}_0(Q_{\delta/\e})} f(k,k', (u^\e_k-C^\e)-(u^\e_{k'}-C^\e))
$$
$$
\geq  {\delta^d\over T^d}\sum_{\{Q_\delta\}}
\sum_{(y,y')\in (N^\#_0(Y)\cap(\mathcal{C}_1\times\mathcal{C}_1)} f(y,y', {(u^{\e,y}-C^\e)-(u^{\e,y'}-C^\e)})
-C'\frac{\e}{\delta}\delta^{d}
$$
 $$
=  {\delta^d\over T^d}\sum_{\{Q_\delta\}}
\sum_{(y,y')\in (N^\#_0(Y)\cap(\mathcal{C}_1\times\mathcal{C}_1)} f(y,y', {u^{\e,y}-u^{\e,y'}})
-C'\frac{\e}{\delta}\delta^{d}.
$$
Considering  (\ref{conne})  and summing up over all the connected components yields
$$
\e^d \sum_{\{Q_\delta\}}\sum_{(k,k')\in N_0(Q_{\delta/\e})} f(k,k', {u^\e_k-u^\e_{k'}})
\geq  {\delta^d\over T^d}\sum_{\{Q_\delta\}}
\sum_{(y,y')\in (N^\#_0(Y)} f(y,y', {u^{\e,y}-u^{\e,y'}})
-C'\frac{\e}{\delta}\delta^{d}.
$$

%

Passing now in (\ref{oups}) to the limit as $\e\to0$, we obtain the estimate
\begin{eqnarray*}
&&\liminf_{\e\to0}\e^d \sum_{\{Q_\delta\}}
\Bigl( \sum_{(k,k')\in N_0(Q_{\delta/\e})} f(k,k', {u^\e_k-u^\e_{k'}})+\sum_{k\in Z(Q_{\delta/\e})} C|u^\e_k-w^k_\e|^2\Bigr)\\
&\ge & (1-\eta){\delta^d\over T^d}\sum_{\{Q_\delta\}}\Bigl(
\sum_{(y,y')\in N_0(Y)} f(y,y', {u^{y}_\delta-u^{y'}_\delta})+\sum_{k\in Y} C|u^{y}_\delta-w^{y}_\delta|^2\Bigr)
 \\
&&+ C(1-\eta) \liminf_{\e\to0}\e^d\sum_{\{Q_\delta\}}\Bigl(\sum_{k\in Z(Q_{\delta/\e})} (|u^\e_k|^2-|u^{y}_\delta|^2)-{1\over \eta}\sum_{k\in Z(Q_{\delta/\e})} |w^\e_k-w^{\e,y}|^2\Bigr),
\end{eqnarray*}
where the subscript $\delta$ indicates the average on $Q_\delta$.

Note that, using Proposition \ref{perco},
\begin{eqnarray}\label{stdue}
&&\nonumber
\int_{Q_\delta}\varphi_g(x,\{u^y_\delta\})\dx\\
&=& {1\over T^d}\nonumber
\int_{Q_\delta}\Bigl(\sum_{(y,y')\in N^\#_0(Y)} f(y,y', {u^{y}_\delta-u^{y'}_\delta})+\sum_{y\in Y} C|u^{y}_\delta-w^{y}(x)|^2\Bigr)\dx\\
&=& {\delta^d\over T^d}\Bigl(\sum_{(y,y')\in N^\#_0(Y)} f(y,y', {u^{y}_\delta-u^{y'}_\delta})+\sum_{y\in Y} C|u^{y}_\delta-w^{y}_\delta|^2\Bigr)+ O\Bigl(\int_{Q_\delta}|w^{y}_\delta-w^{y}(x)|^2\dx\Bigr).
\end{eqnarray}
Comparing (\ref{stuno}) and (\ref{stdue}), by the arbitrariness of the partition $\{Q_\delta\}$ and $\eta>0$, and noting that $\sum\chi_{Q_\delta}\{u^y_\delta\}$ converge to $\{u^y\}$ as $\delta\to 0$
we then get
$$
\liminf_{\e\to 0} \int_\Omega (|u^{\e,y}|^2- |u^y|^2)\le 0,
$$
which implies the strong convergence for all $y\in Y$.
\qed

\subsection{Minimizing movements}
We now fix initial data $u^\e_0$ strongly converging to $\{u_0^y\}$ and with $\sup_\e F_\e(u^{0}_\e)<+\infty$.
Given $\tau>0$ we define iteratively the functions $u^\e_{\tau,n}$ as the unique minimizers of the
problems
$$
\min\Bigl\{ F_\e(v)+ {1\over 2\tau} \sum_{k\in Z^\e(\Omega)} \e^d|v_k- (u^\e_{\tau,n-1})_k|^2\Bigr\},
$$
where we have set $u^\e_{\tau,0}=u_{0}^\e$.

\begin{theorem} Suppose that $f$ and all $f^j_{\hom}$ are continuously twice differentiable.
For all choices of infinitesimal sequences $\e$ and $\tau$, the functions $u^{\tau,\e}(x,t)$ defined by
$$
u^{\tau,\e}(x,t)= (u^\e_{\tau,\lfloor t/\tau\rfloor})_{\lfloor x/\e\rfloor}
$$
converge in $C^{1/2}((0,+\infty); L^2(\Omega))^{T^d}$ to a vector function $\{u^y\}$ with $y\in Y$.
The components of this function are independent of $y$ on each $C_j\cap Y$, so that we equivalently use the notation $u_j$ for their common value. With this notation and setting
$$
c_j= {\#(C_j\cap Y)\over T^d}
$$
for all $j=0,\ldots, N$, $\{u^y\}$ is characterized as the solution of
the coupled system
\begin{eqnarray}\label{systo}\nonumber
c_j{\partial u_j\over\partial  t} &=&{\rm div} \Bigl(\nabla f^j_{\hom}(\nabla u_j)\Bigr)-\sum_{(y,y')\in N_0(Y), y\in C_j}{\partial \over \partial u} f(y,y', {u_j-u^{y'}})\\
&&+\sum_{(y',y)\in N_0(Y), y\in C_j}{\partial \over \partial u} f(y',y, {u^{y'}-u_j}), \qquad j=1,\ldots, N,
\end{eqnarray}
$$
c_0{\partial u^y\over\partial  t}=- \sum_{(y,y')\in N_0(Y)}{\partial \over \partial u} f(y,y', {u^y-u^{y'}})+\sum_{(y',y)\in N_0(Y)}{\partial \over \partial u} f(y',y, u^{y'}-u^y), \qquad y\in Y\cap C_0
$$
with $u_j$ satisfying Neumann boundary conditions   $$\nabla f^j_{\hom}(\nabla u_j)\cdot\nu=0$$ on $\partial\Omega\times(0,+\infty)$,  and  $u^y$ the coupling condition
\begin{equation}\label{combo2}
u^y = u_j\hbox{ if } y\in C_j\cap Y
\end{equation}
and the initial conditions
$$
u^y(0,x)= u^y_0(x).
$$
This limit function also coincides with the limit of gradient flows of $F_\e$.
\end{theorem}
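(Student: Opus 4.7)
The plan is to follow the standard minimizing movements strategy of Ambrosio-Gigli-Savar\'e adapted to the two-scale setting: at each step, treat $u^\e_{\tau,n-1}$ as frozen data, apply the $\Gamma$-convergence result for $G_\e$ with the quadratic perturbation $\|\cdot - u^\e_{\tau,n-1}\|^2/(2\tau)$ (which corresponds to $K=1/(2\tau)$, $g(z,w)=|z-w|^2$), and use strong compactness of the iterates to pass the perturbation to the limit. Concretely, the first step is to establish uniform bounds: standard minimizing-movement energy estimates give $F_\e(u^\e_{\tau,n})\leq F_\e(u^\e_{\tau,n-1})\leq F_\e(u^\e_0)$ and a discrete H\"older estimate $\sum_n\tau\|(u^\e_{\tau,n}-u^\e_{\tau,n-1})/\tau\|^2\leq C$, which, by a standard argument of De Giorgi, yields equi-H\"older-$1/2$ continuity in time of the piecewise-constant interpolation $u^{\tau,\e}(\cdot,t)$ in $L^2(\Omega)$, uniformly in $\e,\tau$.

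Next, by weak compactness (uniform bound in $L^2$) and the Arzel\`a-Ascoli theorem in time, $u^{\tau,\e}(\cdot,t)$ converges, up to subsequences, weakly discrete two-scale to a limit $\{u^y(\cdot,t)\}_{y\in Y}$ that is continuous into $L^2(\Omega)^{T^d}$. The convexity hypotheses (built into the $C^2$ regularity of $f$ and $f^j_{\hom}$ together with the quadratic character of the coercive part) make each incremental problem strictly convex, hence each minimizer $u^\e_{\tau,n}$ is also a recovery sequence for the $\Gamma$-limit of $G_\e$ around the limit datum. Applying Lemma~\ref{stroco} (with $w^\e=u^\e_{\tau,n-1}$, which strongly two-scale converges to the previous iterate by induction) upgrades weak to strong two-scale convergence. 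This is the point at which the continuity of the perturbation in the chosen topology is essential, and is what lets us commute the minimization with the $\Gamma$-limit: the limit $\{u^y(\cdot,t_n)\}$ minimizes $G_0(\cdot)+(1/2\tau)\|\cdot-\{u^y(\cdot,t_{n-1})\}\|^2$, where the quadratic constraint forces $u^y$ to be $y$-independent on each $C_j\cap Y$ (common value $u_j$) because of the infinite coercivity on the hard phases encoded in $G_0$.

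Having identified the discrete-in-time minimizing movement of $G_0$, I would then let $\tau\to 0$. Because $G_0$ is convex, lower semicontinuous and coercive in the appropriate topology, the Ambrosio-Gigli-Savar\'e theory gives a unique limit curve $\{u^y(\cdot,t)\}$ characterized as the gradient flow of $G_0$ with respect to the natural $L^2$-metric (weighted by the densities $c_j$ on the hard components and by $1/T^d$ on the soft sites). Writing the Euler-Lagrange equation for $G_0$ using Proposition~\ref{perco} to make $\varphi_g$ explicit, one obtains precisely the coupled system~(\ref{systo}): variations in $u_j$ produce the homogenized elliptic operator $\mathrm{div}(\nabla f^j_{\hom}(\nabla u_j))$ together with the discrete-divergence coupling terms coming from the $(y,y')\in N_0(Y)$ with $y\in C_j$, with Neumann boundary conditions arising naturally from the absence of boundary constraints; variations in $u^y$ for $y\in C_0\cap Y$ produce only the ODE coupling terms (no spatial derivative, since those sites contribute no elliptic energy). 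The initial condition is inherited from the strong two-scale convergence of $u^\e_0$, and the coupling~(\ref{combo2}) is the constraint built into the domain of $G_0$.

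The main obstacle, and the most delicate point, will be the strong two-scale convergence of the iterates, i.e.\ the verification that the hypotheses of Lemma~\ref{stroco} are preserved under iteration: one must propagate the bound $\sup_\e F_\e(u^\e_{\tau,n-1})<+\infty$ and the strong two-scale convergence of $u^\e_{\tau,n-1}$ through each step, so that the perturbation in the next minimization remains continuous in the topology of two-scale convergence. This follows by induction using the energy-decreasing property of the scheme together with Lemma~\ref{stroco} applied at each step; the base case is the assumption on the initial data. Once this is secured, commutativity of gradient flow with $\Gamma$-convergence for convex functionals (\cite{2013LN,AG,AGS}) gives the final statement that the limit also coincides with the limit of the continuous gradient flows of $F_\e$, and uniqueness of the limit makes the convergence hold along the full family, not merely subsequences.
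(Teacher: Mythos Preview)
Your proposal is correct and follows essentially the same route as the paper: iterate Lemma~\ref{stroco} to upgrade weak to strong two-scale convergence of the discrete minimizers, use the fundamental theorem of $\Gamma$-convergence to identify the limiting incremental scheme for $G_0$, then pass $\tau\to0$ via the general convex minimizing-movements theory and read off the Euler--Lagrange system using Proposition~\ref{perco}. The paper compresses the compactness and stability steps into a citation of Theorem~11.2 of \cite{2013LN}, while you spell out the H\"older-in-time estimate and the induction that propagates strong convergence through the scheme; conversely, the paper writes the Euler--Lagrange equations at the discrete-in-$\tau$ level and passes to the limit in those, whereas you pass to the gradient flow first and differentiate $G_0$ afterwards --- an immaterial reordering in the convex $C^2$ setting. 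One small caution: convexity of $f$ and of the $f^j_{\hom}$ is an assumption here (as in the paper's introduction and in Proposition~\ref{perco}), not a consequence of $C^2$ regularity, so you should state it as a hypothesis rather than as something ``built into'' smoothness.
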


\begin{proof} By the convexity of the functionals we can use the stability for minimizing movements along $F_\e$.
The results will follow by applying Theorem 11.2 in \cite{2013LN}, provided that we have strong convergence of minimizing sequences (see \cite{2013LN} Remark 11.2).
This follows from Lemma \ref{stroco} applied iteratively with
$$
g^\e_k(u)= {1\over 2\tau} |u-(u^\e_{\tau,n-1})_k|^2,
$$
so that all sequences $u^\e_{\tau,n}$ are strongly converging as $\e\to0$
(thanks to the strong convexity of the $\Gamma$-limit).
If we denote by $\{u_{\tau,n}^y\}$ their two-scale limit, by the fundamental theorem of $\Gamma$-convergence
they solve iteratively an analogous minimization scheme with $u_\e^{\tau,0}=\{u_0^y\}$,
and $\{u_{\tau,n}^y\}$ being the unique minimizer of
\begin{eqnarray}\label{mili}\nonumber
&&\min\Bigl\{ \sum_{j=1}^N{1\over \#(C_j\cap Y)}\sum_{y\in C_j\cap Y}\int_\Omega f^j_{\hom}(\nabla v^y)\dx\\
&&+
{1\over T^d}\sum_{y\in Y} {1\over 2\tau}\int_\Omega |v^y(x)-u_{\tau,n-1}^y(x)|^2\,dx
+\sum_{(y,y')\in N_0(Y)}\int_\Omega  f(y,y', {v^y-v^{y'}})\dx\Bigr\},
\end{eqnarray}
with the constrain that $v^y$ is constant on each component $C_j$.

Under the assumption that $f$ and $f^j_{\hom}$ are $C^2$ we can derive the Euler-Lagrange equations for $\{u_{\tau,n}^y\}$.
It is convenient to separate the hard and soft phases by introducing the functions
\begin{equation}\label{combo}
u^{\tau,n}_j= u_{\tau,n}^y \hbox{ if } y\in C_j\cap Y
\end{equation}
for $j=1,\ldots, N$, and the set of indices $C_0=Y\setminus \bigcup_{j=1}^NC_j$.

For $j=1,\ldots, N$ we obtain
\begin{eqnarray*}
&&-{\rm div} \nabla f^j_{\hom}(\nabla u^{\tau,n}_j)+c_j{u^{\tau,n}_j-u^{\tau,n-1}_j\over \tau}\\
&&+\sum_{(y,y')\in N_0(Y), y\in C_j}{\partial \over \partial u} f(y,y', {u^{\tau,n}_j-u_{\tau,n}^{y'}})-\sum_{(y',y)\in N_0(Y), y\in C_j}{\partial \over \partial u} f(y',y, {u_{\tau,n}^{y'}-u^{\tau,n}_j})=0
\end{eqnarray*}
with Neumann boundary condition, that reads
$$
 \nabla f^j_{\hom}(\nabla u^{\tau,n}_j)\cdot \nu=0 \qquad\hbox{on }\partial \Omega, \quad j=1,\ldots,\,N,
$$
where $\nu$ stands for the exterior normal on $\partial \Omega$.

For fixed $y\in C_0$ we obtain instead
$$
c_0{u^{\tau,n}_y-u^{\tau,n-1}_y\over \tau}+ \sum_{(y,y')\in N_0(Y)}{\partial \over \partial u} f(y,y', {u_{\tau,n}^y-u_{\tau,n}^{y'}})-\sum_{(y',y)\in N_0(Y)}{\partial \over \partial u} f(y',y, u_{\tau,n}^{y'}-u_{\tau,n}^y)=0.
$$
Note the coupling condition (\ref{combo}).

We define the piecewise-constant trajectories
$$
u^{\tau}_j(t,x))= u^{\tau,\lfloor t\tau\rfloor}_j(x)
$$
for $j=1,\ldots, N$, and
$$
u_{\tau}^y(t,x)= u_{\tau,\lfloor t\tau\rfloor}^y(x)
$$
for $y\in C_0$, which converge uniformly in $[0,+\infty)$ as $\tau\to 0$ to functions $u_j(t,x)$ and $u^y(t,x)$,
respectively.
By passing to the limit in the Euler-Lagrange equations we obtain system (\ref{systo}).
\end{proof}

\begin{remark}\rm The limit system is not decoupled also if $C_0=\emptyset$, in which case we have the system of partial differential equations for $u_j$ only
\begin{eqnarray*}
&& c_j{\partial u_j\over\partial  t} ={\rm div} \Bigl(\nabla f^j_{\hom}(\nabla u_j)\Bigr)\\
&&\qquad -\sum_{j'\neq j}\Bigl(\sum_{(y,y')\in N_0(Y), y\in C_j, y'\in C_{j'}}{\partial \over \partial u} f(y,y', {u_j-u_{j'}})\\
&&\qquad +\sum_{(y',y)\in N_0(Y), y\in C_j, y'\in C_{j'}}{\partial \over \partial u} f(y',y, {u_{j'}-u_j})\Bigr).
\end{eqnarray*}
\end{remark}


\begin{example}\rm
In the case of the energies in Example \ref{exh}(2) the limit $(u_1(t,x), u_2(t,x))$ satisfies
$$
\cases{\displaystyle {\partial u_2\over\partial t}= 2{\partial^2 u_2\over\partial x^2}-u_2+u_1\cr\cr
 \displaystyle{\partial u_1\over\partial t}= u_1-u_2\cr\cr
\displaystyle u_1(x,0)= u_2(x,0)=u^0(x)}.
$$
Note that we may solve the ODE and obtain the integro-differential problem satisfied by $u=u_2$ only
$$
\cases{\displaystyle {\partial u(x,t)\over\partial t}=
2{\partial^2 u(x,t)\over\partial x^2}-u(x,t)+u^0(x)e^{-t}+\int_0^t e^{s-t}u(x,s)\,ds\cr\cr
\displaystyle u(x,0)=u^0(x)}
$$
\end{example}

\section{Appendix}

\begin{lemma}\label{l_pwc}
Let $u_k=v_k$ if $k\not\in \bigcup_{j=1}^N A_j$. Then we have
\begin{eqnarray}\label{pwc}\nonumber &&
\sum_{(k,k')\in N_0(\Omega)}\e^d|f(k,k',{u_k-u_{k'}})- f(k,k',{v_k-v_{k'}})|
\\&\le&
C\sum_{j=1}^N
\Bigl(\sum_{k\in A_j}\e^d|u_k-v_k|^p\Bigr)^{1/p} \Bigl(F_\e(u)+F_\e(v)\Bigr)^{p-1/p}
\end{eqnarray}
\end{lemma}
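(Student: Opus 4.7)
\medskip
\noindent\textbf{Proof plan.} The plan is to apply the Lipschitz-type bound (\ref{c2}) pointwise on each pair $(k,k')\in N_0$, then use a H\"older inequality to separate a factor measuring $\|u-v\|_{p}$ on the hard phase from a factor controlled by the energies. A preliminary observation is that, because (\ref{c3}) asserts that $f(k,k',\cdot)$ is positively $p$-homogeneous on pairs $(k,k')\in N_0$, the bound (\ref{c2}) can be sharpened on $N_0$ to
\begin{equation*}
|f(k,k',z)-f(k,k',z')|\le C|z-z'|\bigl(|z|^{p-1}+|z'|^{p-1}\bigr),
\end{equation*}
by rescaling $z,z'$ by a large $\lambda$ and using $f(k,k',\lambda z)=\lambda^p f(k,k',z)$ to absorb the constant ``$1$''. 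Likewise, $f(k,k',\cdot)\ge0$ and $p$-homogeneous together with $f\ge c(|z|^p-1)$ force $f(k,k',z)\ge c|z|^p$ for $(k,k')\in N_0$, so that
$\sum_{(k,k')\in N_0(\Omega)}\e^d|u_k-u_{k'}|^p\le c^{-1}F_\e(u)$
(recall the scaling $\e^{d+p}$ on $N_0$ combined with $p$-homogeneity yields $\e^df(k,k',u_k-u_{k'})$ as the energy contribution).

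\smallskip
Writing $w=u-v$, one has $(u_k-u_{k'})-(v_k-v_{k'})=w_k-w_{k'}$, and the previous sharpened estimate gives
\begin{equation*}
|f(k,k',u_k-u_{k'})-f(k,k',v_k-v_{k'})|\le C\bigl(|w_k|+|w_{k'}|\bigr)\bigl(|u_k-u_{k'}|^{p-1}+|v_k-v_{k'}|^{p-1}\bigr).
\end{equation*}
Since $w_k=0$ unless $k\in\bigcup_{j=1}^N A_j$, the sum on the left-hand side of (\ref{pwc}) can be rewritten as a finite combination of terms of the form
\begin{equation*}
T_j=\sum_{(k,k')\in N_0(\Omega),\ k\in A_j}\e^d|w_k|\,|u_k-u_{k'}|^{p-1},
\end{equation*}
together with three analogous terms obtained by swapping $u\leftrightarrow v$ or $k\leftrightarrow k'$ (the latter reducing to the former by the symmetry of $N_0$).

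\smallskip
Each $T_j$ is treated by H\"older's inequality with exponents $p$ and $p/(p-1)$:
\begin{equation*}
T_j\le\Bigl(\sum_{(k,k')\in N_0(\Omega),\ k\in A_j}\e^d|w_k|^p\Bigr)^{1/p}\Bigl(\sum_{(k,k')\in N_0(\Omega),\ k\in A_j}\e^d|u_k-u_{k'}|^p\Bigr)^{(p-1)/p}.
\end{equation*}
In the first factor, for fixed $k$ the number of admissible $k'$ is at most $\#P_0-1$, hence the inner sum is bounded by $(\#P_0)\sum_{k\in A_j}\e^d|u_k-v_k|^p$; in the second factor, the inner sum is bounded by $c^{-1}F_\e(u)$ by the coerciveness observation above. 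Summing $T_j$ together with its three companions over $j=1,\ldots,N$ yields (\ref{pwc}).

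\smallskip
The only mildly subtle step is the passage from (\ref{c2}) to its homogeneous sharpening; the rest is bookkeeping combined with a single application of H\"older's inequality and the observation that $|w|$ is supported on $\bigcup_j A_j$. I do not anticipate any other obstacle.
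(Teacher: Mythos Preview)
Your proof is correct and follows essentially the same route as the paper's: apply the Lipschitz bound (\ref{c2}) pointwise (sharpened via the $p$-homogeneity (\ref{c3}) to drop the additive $1$), restrict the sum to indices $k\in A_j$ using the support of $u-v$, then split by H\"older and bound the energy factor via the coerciveness $f\ge c|z|^p$ on $N_0$. Your write-up is in fact slightly more explicit than the paper's on two points --- the homogeneity sharpening of (\ref{c2}) and the $\#P_0$ counting that controls the first H\"older factor --- but the argument is the same.
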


\Proof
We estimate
\begin{eqnarray*}
&&\sum_{(k,k')\in N_0(\Omega)}\e^d|f(k,k',{u_k-u_{k'}})- f(k,k',{v_k-v_{k'}})|\\
&\le& C\sum_{(k,k')\in N_0(\Omega)}\e^d|({u_k-u_{k'}})-({v_k-v_{k'}})| (|u_k-u_{k'}|^{p-1}+ |{v_k-v_{k'}}|^{p-1})
\\
&\le& C\sum_{j=1}^N
\sum_{(k,k')\in N_0(\Omega),k\in A_j}\e^d|u_k-v_k| (|u_k-u_{k'}|^{p-1}+ |{v_k-v_{k'}}|^{p-1})\\
&\le& C\sum_{j=1}^N
\Bigl(\sum_{(k,k')\in N_0(\Omega),k\in A_j}\e^d|u_k-v_k|^p\Bigr)^{1/p}
\Bigl(\sum_{(k,k')\in N_0(\Omega),k\in A_j}(|u_k-u_{k'}|^p+ |{v_k-v_{k'}}|^p)\Bigr)^{(p-1/p)}
\\
&\le& C\sum_{j=1}^N
\Bigl(\sum_{k\in A_j}\e^d|u_k-v_k|^p\Bigr)^{1/p}
\Bigl(\sum_{(k,k')\in N_0(\Omega)}\e^d(|u_k-u_{k'}|^p+ |{v_k-v_{k'}}|^p)\Bigr)^{(p-1/p)}.\\
\end{eqnarray*}
The thesis then follows by (\ref{c2}) and  (\ref{c3}).\qed


\begin{thebibliography}{26}\frenchspacing
\baselineskip10pt

\bibitem{ACDP} E. Acerbi, V. Chiad\`o Piat, G. Dal Maso and D.
Percivale.
An extension theorem from connected sets, and homogenization in
general periodic domains. {\em Nonlinear Anal.} {\bf 18} (1992),
481--496.

\bibitem{AC} R. Alicandro and M. Cicalese.
A general integral representation result for continuum
limits of discrete energies with superlinear growth.
{\em SIAM J. Math. Anal.}  {\bf 36} (2004), 1--37.


\bibitem{Al} G. Allaire. Homogenization and two-scale convergence.
{\em SIAM J. Math. Anal.} {\bf 23} (1992), 1482--1518.

\bibitem{AG}
L. Ambrosio, N. Gigli. A user's guide to optimal transport. In:
B. Piccoli and M. Rascle (eds.)
{\em Modelling and Op\-ti\-mi\-sa\-tion of Flows on Networks}.
Lecture Notes in Mathematics, pp. 1--155. Springer, Berlin (2013).

\bibitem{AGS} L. Ambrosio, N. Gigli, G. Savar\'e.
{\em Gradient flows in metric spaces and in the space of probability measures, }
Lectures in Mathematics ETH Z\"urich, Birkhh\"auser, Basel, 2008.

\bibitem{ADH} T. Arbogast, J. Douglas Jr. and U. Hornung.
Derivation of the double porosity model of single phase flow via
homogenization
theory. {\it SIAM J. Math. Anal.} {\bf 21} (1990), 823--836.

\bibitem
{BoM} A. Boughammoura, M. Mabrouk. Homogeneisation d'un milieu elastique
fortement heterogene. {\it C. R. Mecanique}  {\bf 330} (2002), 543-548.

\bibitem
{BLM} A. Bourgeat, S. Luckhaus and A. Mikelic, Convergence of the
homogenization process for a double-porosity model of immiscible
two-phase flow, {\it SIAM J. Math. Anal.} {\bf 27} (1996),
1520-1543.

\bibitem{GCB} {\rm A. Braides}.
{\it $\Gamma$-convergence for Beginners}, Oxford University Press,
Oxford, 2002.

\bibitem{Handbook} A. Braides.
A handbook of $\Gamma$-convergence.
In {\it Handbook of Differential Equations.
Stationary Partial Differential Equations, Volume $3$}
(M. Chipot and P. Quittner, eds.), Elsevier, 2006.

\bibitem{2013LN} A. Braides.
{\em Local minimization, variational evolution and $\Gamma$-convergence}.
Lecture Notes in Math. {\bf 2094}, Springer Verlag, Berlin, 2013.

\bibitem{BCP}  A. Braides, V. Chiad\`o Piat and A. Piatnitski.
A variational approach to double-porosity problems.
{\it Asymptotic Anal.}  {\bf 39} (2004), 281-308.

\bibitem{BDF}
 A. Braides and A. Defranceschi,
{\it Homogenization of Multiple Integrals}.
Oxford University Press,  Oxford, 1998.

\bibitem{BS-UMI} A. Braides and M. Solci,
Multi-scale free-discontinuity problems with soft inclusions.
Boll. Unione Mat. Ital. (IX), {\bf 6} (2013), 29--51

\bibitem{Ng} G. Nguetseng. A general convergence result for a functional related to the theory of homogenization. {\em SIAM J. Math. Anal.} {\bf 20} (1989),  608--623

\bibitem
{Pa91} G. Panasenko, Multicomponent homogenization of processes in
strongly non-homogeneous structures, {\it Math. USSR Sbornik} {\bf
69} (1991), 143-153.

\bibitem
{PP} L. Pankratov and A. Piatnitski, Nonlinear ``double porosity"
type model, {\it C.R. Acad. Sci. Paris,} {\bf Ser. I 334} (2002),
435-440.

\bibitem
{Sa97} G. V. Sandrakov, Homogenization of elasticity equations with contrasting
coefficients. {\it Sbornik: Math.} {\bf 190} (1999), 1749-1806.

\bibitem
{S} S. B. Schul'ga, Homogenization of nonlinear variational
problems by means of two-scale convergence, {\it Proc. Steklov
Inst. of Math.} {\bf 236} (2002), 357-364.

\bibitem{Solci} M. Solci, Double-porosity homogenization for perimeter functionals. {\em Math. Meth. Appl. Sci.}
{\bf 32} (2009), 1971--2002.

\bibitem{Solci2}M. Solci, Multiphase double-porosity homogenization for perimeter functionals. {\em Math. Methods Appl. Sci.} {\bf 35} (2012), 598--620.

\end{thebibliography}
\end{document}